\theoremstyle{plain}
\newtheorem{theorem}{Theorem}[section]
\newtheorem{lemma}[theorem]{Lemma}
\theoremstyle{definition}
\theoremstyle{remark}
\newtheorem{remark}{Remark}
\newcommand{\G}{\Gamma}
\newcommand{\U}{\Upsilon}
\DeclareMathOperator*{\OmSum}{\mathlarger{\mathlarger{\Omega}}}
\DeclareMathOperator*{\MhSum}{\mathlarger{\mathlarger{\mho}}}
\begin{document}

\title{$\Delta y = e^{sy}$ or:\\ How I Learned to Stop Worrying and Love the $\G$-function.}

\author{James David Nixon\\
	JmsNxn92@gmail.com\\
	University of Toronto}

\maketitle

\begin{abstract}
For a nice holomorphic function $f(s, z)$ in two variables, a respective holomorphic Gamma function $\G = \G_f$ is constructed, such that $f(s, \G(s)) = \G(s + 1)$. Along the way, we fall through a rabbit hole of infinite compositions, First Order Difference Equations, and absurd functional equations... This paper is orchestrated around an investigation into the unconventional equation $\Delta y = y(s+1)-y(s) = e^{sy(s)}$ and its solutions in the complex plane.
\end{abstract}

\emph{Keywords:} Complex Analysis; Difference Equations; Gamma Function; Infinite Compositions\\

\emph{2010 Mathematics Subject Classification:} 30D05; 39A45; 39B32; 32A10\\

\section{Introduction}\label{sec1}
\setcounter{equation}{0}

Leonhard Euler's Gamma function has been around. And this nearly three hundred year old grand-mother of holomorphic functions still gets around. She pops up in the most unusual places. From the Riemann zeta function \cite{ref7}, to the volume of an $n$-ball. When Euler first wrote down the pivotal equation:

\[
y = 1 \times 2 \times 3 \times ... \times n = \int (-lx)^n\,dx
\]

in a letter to Christian Goldbach \cite{ref6, ref11} after a series of correspondence with Daniel Bernoulli \cite{ref11}, none of them could've predicted the ubiquitous future the function was about to have.\footnote{In more modern notation this becomes the familiar expression $y = n! = \int_0^1(-\log(x))^n dx$.}

It's greatly owed to the odd and strange functional equation $(n + 1)y(n) =y(n + 1)$ for $n \in \mathbb{R}^+$ that the three were interested in the soon to be `analytic factorial." This became the $\Pi$-function to fellows like Gauss \cite{ref14}; and then shifted by Legendre, on a note by Euler \cite{ref11}, to what we now call the $\G$-function \cite{ref6, ref11, ref14}. In the climate of 18th century mathematics, this functional equation was no doubt a peculiarity.

The Gamma function aged like fine cognac, she found herself spread across nearly every field of analysis--and nearly every generation of mathematicians has had something new to say about her, to paraphrase Davis \cite{ref6}. Her aesthetics seem boundless: from a nice reflection formula she shares with an even greater ancestor of mathematics sin(z) \cite{ref1, ref5, ref14}; a neat multiplication formula \cite{ref1}; it's a celebrity in closed form solutions of definite integrals; unparalleled importance in fractional calculus \cite{ref13}; a neat and tidy uniqueness criterion due to Bohr-Mollerup \cite{ref2}; and rare young generalizations, like what we like to call the Barnes G-function \cite{ref3}. And on and on and on...

Her functional equation $s\G(s) = \G(s + 1)$ is as famed a functional equation as say, $e^z \cdot e^w = e^{z+w}$. And in a modern mathematician's library of functions, the $\G$-function's rather important to say the least. This paper isn't concerned so much about why the Gamma function is important. This paper is simply assuming its importance (on good faith), and running on luck that a broad generalization may gain interest. The Gamma function, she gets around; no doubt the apples don't fall far from the tree. This paper concerns itself with why we \emph{love} the Gamma function. And--\emph{There's the rub!}

\subsection{Setting up the discussion}

The $\G$-function's functional equation $s\G(s) = \G(s + 1)$ is rather simple to express, but it wasn't back when Bernoulli was still working on solving first order ODEs. (Without Euler's $f$ at $x$ notation $fx$, try writing out the Gamma functional equation.) There's a kernel of generalization in this functional equation, if only it's written more abstractly. To set a different light on it and shed some perspective on what we're going to do in this paper: let's write $f(s, z) = sz$. Instead of writing $s\G(s) = \G(s + 1)$ let's unpack the equation to see what it's really doing,

\begin{equation}
f(s, \G(s)) = \G(s + 1)
\label{eq:1A}
\end{equation}

This may be a trivial identification so it's rather uninspiring. But, what if $f$ were a different function than the simple one $f(s, z) = sz$? What if, by chance, $f(s, z)$ were something more along the lines of:

\begin{equation}
f(s, z) = z + e^{sz}
\label{eq:1B}
\end{equation}

How about if it were:

\begin{equation}
f(s, z) = z + \frac{z^2}{s^2} + \frac{z^3}{s^3}
\label{eq:1C}
\end{equation}

Let's go out of our way:

\begin{equation}
f(s, z) = z\cos(2^sz) + \frac{\sin(z)}{s^2 +1} + \frac{\sin^2(z)}{(s^2+1)^2}
\label{eq:1D}
\end{equation}

Can there still exist a nice function $\G = \G_f$ such that $f(s, \G(s)) = \G(s + 1)$?

\subsection{A relation to Picard-Lindel\"{o}f}
This problem is paralleled by the problem Picard and Lindel\"{o}f faced when examining the expression $y' = g(s, y)$ before arriving at their eponymous theorem on solutions to First Order Differential Equations \cite{ref4}. That, in summary, given nice Lipschitz $g$, a solution to $y'= g(s, y)$ exists locally. Changing the operator from differentiation $y'$ to $\mathcal{T} y(s) = y(s + 1)$, we are in a similar situation, we're looking for solutions of $y(s+1) = \mathcal{T} y = f(s, y)$ given \emph{nice} $f$. Solving said equations involve a certain care. Especially when determining what we mean by \emph{nice}.

In this paper we will construct functions $\G_f$ satisfying the above equation $f(s, \G_f (s)) = \G_f (s + 1)$, for reasonably well-behaved $f$. Again we'll have to carve out what reasonably well behaved means, but the net is rather large. To paint a picture, the above functions in: \ref{eq:1B}, \ref{eq:1C}, \ref{eq:1D} all have a respective Gamma function; though each is wildly more chaotic than the last all are solvable through the same general mechanism. We will focus especially on the $f$ in Equation \ref{eq:1B}. We will lead by example; in tweaking the results of this paper, a mosaic of functions $f$ for which $\G_f$ can be constructed will start to appear.

For a solution $y$ to $y(s + 1) = \mathcal{T} y = f(s, y)$ and $f(s, z) = z + e^{sz}$, means $y(s+ 1) = e^{sy} +y$, or $\Delta y = y(s+ 1)-y(s) = e^{sy(s)}$. From this identification it can be deduced this problem is sistered by First Oder Difference Equations. Namely, equations of the form $\Delta y = y(s+ 1)-y(s) = q(s, y)$ for a reasonably well behaved, \emph{nice} function $q$. Where the relationship between $f$ and $q$ is simply a change of variables $z + q(s, z) = f(s, z)$.

Phrasing the problem in this language makes the comparison to the existence problem of differential equations slightly more apparent. We are swapping up differentiations $y'$ for differences $\Delta y$--this is not an easy task. In contrast to the differential case: whereas a nice Lipschitz condition on $g(s, z)$ verified the existence of a suitable $y$ such that $y' = g(s, y)$, giving nice local solutions. In the difference equation setting it is imperative we speak in global terms. Which is to say we will talk of $y = \G_f$ being defined on rather large domains rather than small neighborhoods.

The results which follow can be stated in the language of either operator: $\mathcal{T} y = y(s + 1)$ or $\Delta y = y(s + 1) - y(s)$; we will freely interchange vocabulary. We will not stress too much of either/or. But the general sustenance is clearer in the difference equation setting. The conditions on $q(s, z)$ which need to be satisfied in order for a solution $\Delta y = q(s, y)$ to exist are difficult to phrase without sufficient context. But the gist is: the following sum must converge compactly normally on its domain of holomorphy (in the sense of Remmert Reinhold [14, 15]) in both variables:

\begin{equation}
\sum_{j=1}^\infty q(s-j,z)
\label{eq:1E}
\end{equation}

This is the crux of the method. It is inspired by the Cambridge fellow Srinivasa Ramanujan's work on difference equations. Ramanujan spent his time worrying about the indefinite sum though, which reduces to the case where $q(s, z) = q(s)$ is constant in $z$. Ramanujan noticed the function:

\[
y(s) = \sum_{j=1}^\infty q(s - j)
\]

satisfied the difference equation $\Delta y = q(s)$, and devoted his time towards expanding the domain of functions in which this line of argument works. Though his results are framed as a matter of summing divergent series. Our work is of a slight generalization. We will switch it up and loosen the conditions by letting $q$ be holomorphic in $z$ but not necessarily constant. However, this weak form of Ramanujan Summation is recovered from our results if we take $q$ to be constant in $z$.\footnote{If we were to let $z + q(s, z) = f(s, z) = f(z)$ be constant in $s$, then we would arrive at an inverse Abel equation $\G(s + 1) = f(\G (s))$, which deserves a far more careful treatment than the author has space in this paper.}

It is important to note now, as the author hasn't done so yet; we are dealing entirely in the realm of complex analysis. The functions we are talking about are always holomorphic. This further distances us from the differential equation case, where Picard and Lindel\"{o}f framed their results in a purely real analysis scenario. However, vivisecting the proofs which follow it is not hard to see a manner of phrasing these results in a real analysis setting; the author simply finds the results more at home in complex analysis. Just as the $\G$-function is at home in complex analysis.

The rabbit hole we take is a little unconventional, and so therefore, the presentation and the writing style must follow suit. We must be as unconventional as Euler and Bernoulli were when they tried to interpolate the factorial \cite{ref6, ref11}, and keep its functional equation (back when `functional equation" didn't really exist in a mathematician's lexicon the way it does now).

\subsection{A detour through Infinite Compositions}

We start by reconstructing a shamefully under-represented area of mathematics. This foray of mathematics is the study of infinite compositions of holomorphic functions. There exists papers scattered across journals, and the off hand remarks on internet forums, but as remarked, there is a sad vacancy in its representation. For interesting reviews the author refers to the following papers \cite{ref8, ref9, ref10, ref12}. The work of John Gill is especially interesting. For the reader's convenience, the author avoids using any of the actual math in these papers. He thinks he's found a shorter way. 

An infinite compostion of holomorphic functions is as its name suggests. It's a sequence of holomorphic functions that are composed together sequentially. There are two ways to do this because composition is non-commutative. For $\{\phi_j\}_{j=1}^\infty$ a sequence of holomorphic functions taking open $\mathcal{G} \to \mathcal{G}$, take:

\begin{equation}
\lim_{n\to \infty} \phi_1(\phi_2(...\phi_n(z)))
\label{eq:1F}
\end{equation}

or take

\begin{equation}
\lim_{n\to \infty} \phi_n(\phi_{n-1}(...\phi_1(z)))
\label{eq:1G}
\end{equation}

We care about the first case in this paper, limits of the form in Equation \ref{eq:1F}. In plainer English, we care about $\lim_{n \to \infty} \Phi_n(z)$ where $\Phi_n(z) = \Phi_{n-1}(\phi_n(z))$ with the initial condition $\Phi_1(z) = \phi_1(z)$. These are known as Inner Compositions as we add terms on the inside rather than the outside. Naturally then, limits of the form in Equation \ref{eq:1G} are referred to as Outer Compositions. An infinite composition is something rarely found these days. Though the language betrays us, for an infinite sum and an infinite product are both kinds of infinite compositions. For instance if $\phi_j(z) = z + a_j$ for a sequence of complex
numbers $\{a_j\}_{j=1}^\infty$ then

\[
\phi_1(\phi_2(...\phi_n(z))) = \lim_{n \to \infty} \big{(}z + \sum_{j=1}^n a_j\big{)} = z + \sum_{j=1}^\infty a_j
\]

Equally so, if $\phi_j(z) = b_j z$ for a sequence of complex numbers $\{b_j\}_{j=1}^\infty$ then

\[
\lim_{n\to\infty} \phi_1(\phi_2(...\phi_n(z))) = z \prod_{j=1}^\infty b_j
\]

The simplicity of how infinite sums and infinite products are infinite compositions can be trivial. Perhaps the closest thing that can jog the idea of what an infinite composition is, using an average library of ideas, are continued fractions. Wherein, if we define a sequence of functions $\phi_j(z) = \frac{d_j}{c_j+z}$ for any sequence of complex numbers $\{d_j\}_{j=1}^\infty$, $\{c_j\}_{j=1}^\infty$ then the sequence of nested fractions:

\[
\cfrac{d_1}{c_1+\cfrac{d_2}{c_2+\cfrac{d_3}{c_3+\dots + \cfrac{d_n}{c_n+z}}}}
\]

up to $n$ can be written as the composition of $\phi_j$ up to $n$ at $z$: $\phi_1(\phi_2(...\phi_n(z)))$. Letting $n \to \infty$ gives us our continued fraction--foregoing convergence issues. But what if we played around more with $\phi_j$? What if we let it be something other than a standard function like $\frac{d_j}{c_j+z}$, $b_j z$, or $z+a_j$? Letting the sequence of functions be arbitrary holomorphic functions and letting $n\to\infty$ raises the question: where and when will it converge? To give more clarity to this, let us introduce an Euler-type summation/product notation:

\begin{equation}
\OmSum_{j=n}^m \phi_j = \phi_n(\phi_{n+1}(...\phi_m(z)))
\label{eq:1H}
\end{equation}

With this our question can be written more precisely: When is $\OmSum_{j=1}^\infty \phi_j(z)$ a function? The short answer is, at least whenever the following sum converges compactly normally for $z \in \mathcal{G}$:

\[
\sum_{j=1}^\infty \phi_j(z) -z
\]

Which is to say when $\phi_j: \mathcal{G} \to \mathcal{G}$ and for all compact disks $\mathcal{K} \subset \mathcal{G}$ the sum

\[
\sum_{j=1}^\infty \sup_{z\in \mathcal{K}} |\phi_j (z) - z| < \infty
\]

then $\OmSum_{j=1}^\infty \phi_j$ is a holomorphic function on $\mathcal{G}$.

\begin{remark}
The author will now briefly point out the notation used in Equation \ref{eq:1H} is not standardized, but the author finds it preferable to the notation typically used in the field. And since the object itself is rather novel, we are in a position to attempt to standardize this notation. It is commonplace to write $\mathcal{R}$ in place of $\Omega$, where the $\mathcal{R}$  is taken to stand for Right Handed Composition, or Right Composition. The author does not like the use of a letter from the Latin alphabet in such a circumstance and opted for a capital Greek letter as it is more in-tune with Euler's notation $\Sigma$ and $\Pi$. The letter $\Omega$ was chosen as if we invert it to $\mho$ we still arrive at a Greek letter. Therein the Outer Composition can be written:

\begin{equation}
\MhSum_{j=n}^m = \phi_m(\phi_{m-1}(...\phi_n(z)))
\label{eq:1I}
\end{equation}

And there is a nice similarity between the two. Sadly, the only way to remember which type of composition is assigned to which Greek symbol ($\Omega$ or $\mho$) is through rote. Whereas the more standard notation would use $\mathcal{L}$ in place of $\mho$, to stand for Left Handed Composition, or Left Composition--and from this $\mathcal{R}$ and $\mathcal{L}$ identification its easier to remember which composition we are talking about. The author finds this point moot though, as its comparable to new students needing to learn $\Sigma$ is a sum and $\Pi$ is a product despite the identification being nearly arbitrary.
\end{remark}

To see why a study of infinite composition helps us on the road to construct $\G = \G_f$ for arbitrary $f(s, z)$ where $f(s, \G(s)) = \G(s + 1)$ is a bit tricky. But no doubt, the yarn will unravel as we go along. And remember, 18th century mathematicians (like Euler and Bernoulli and Goldbach) needed to get around to get with the Gamma function. They flirted with: imaginary numbers, limits at infinity, the integral at infinity, the exponential function (both $e^x$ and $x^s$), the derivative, the logarithm; all before they ever got around proper. So for our purposes, we'll need to get around.

\section{Converging infinite compositions on the unit disk}\label{sec2}
\setcounter{equation}{0}

We're going to pull a rabbit out of the hat, and justify why this rabbit is the rabbit we were looking for. The author is following an expository style for this paper. For this reason, none of the results in this section will be used. These results are intended as a proof sketch of what will follow in the next and coming sections. It is helpful to think of this paper as the production of a television show; this will be the story board phase of production.

It is necessary we build up a fair amount of machinery before we can tackle First Order Difference Equations. This section is intended to familiarize the reader with the sweeping broad-stroked motions needed. Which, in honesty, is solely acclimatizing the reader with infinite compositions and its \emph{je ne sais quoi} which has made it such a difficult idea to study. For the remainder of this section the following schema is used: The functions $\{\phi_j\}_{j=1}^\infty$ are holomorphic functions taking $\mathbb{D} = \{z \in \mathbb{C}\, |\, |z| < 1\}$, the unit disk in the complex plane, to itself. We also insist that

\begin{equation}
\sum_{j=1}^\infty ||\phi_j(z) - z||_{\mathbb{D}} < \infty
\label{eq:2A}
\end{equation}

where $||b(z)||_{\mathbb{D}} = \sup_{z\in \mathbb{D}} |b(z)|$. The reader should note this is a slightly stronger condition than normal convergence on compact subsets. This is simply normal convergence of the sum $\sum_{j=1}^\infty \phi_j (z) - z$. We will start small and work our way up to normal convergence on compact subsets, which is a much more natural condition on the sequence $\phi_j$. This is our rabbit, and here's why it matters. 

We want to show that Condition \ref{eq:2A} is enough for our sequence of functions $\Phi_n = \phi_1(\phi_2(...\phi_n(z))) = \OmSum_{j=1}^n \phi_j$ converge uniformly as $n\to\infty$ on compact subsets of $\mathbb{D}$. Or to speak in a new language, that $\OmSum_{j=1}^\infty \phi_j$ is a holomorphic function.

Expressions which look like Condition \ref{eq:2A} will pop up continuously in this paper--only slightly altered each time. How they are handled will be very uniform, though. Conditions looking like Condition \ref{eq:2A} will be simply known as Summability Criterion. Their inherent value will be slowly unearthed. Let's take a simple case first and reverse engineer why Condition \ref{eq:2A} might be enough to ensure convergence of $\Phi_n = \phi_1(\phi_2(...\phi_n(z)))$ as $n\to\infty$, and is a good candidate. For the sake of the argument we will return to infinite sums and infinite products. Take the sequence of functions $h_j (z) = a_j + z$ for an arbitrary sequence $a_j \in \mathbb{C}$. Even though $h_j$ doesn't take $\mathbb{D} \to \mathbb{D}$ bear with the author momentarily. Regard:

\[
\OmSum_{j=1}^n h_j = h_1(h_2(...h_n(z))) = z + \sum_{j=1}^n a_j
\]

If this were to converge as $n \to \infty$, it would mean the sequence $\{a_j\}_{j=1}^\infty$ is summable. Well if we require

\[
\sum_{j=1}^\infty ||h_j(z) - z||_{\mathbb{D}} = \sum_{j=1}^\infty |a_j| < \infty
\]

Then, this is really just saying that $\{a_j\}_{j=1}^\infty$ is absolutely convergent. This is a bit vacuous at first glance, so let's look at products. Supposing $g_j (z) = b_j z$ is a sequence of functions for an arbitrary sequence $\{b_j\}_{j=1}^\infty$ then:

\[
\OmSum_{j=1}^n g_j = z \prod_{j=1}^n b_j
\]

If as $n \to \infty$ this converges, it merely means the infinite product of $b_j$ converges. But, in requiring

\[
\sum_{j=1}^\infty ||g_j(z) - z||_{\mathbb{D}} < \infty
\]

Then since $g_j(z) - z = b_j z - z = (b_j - 1)z$ we are given $\sum_{j=1}^\infty |b_j - 1| < \infty$ and the infinite product of $b_j$ converges absolutely. Which is something a bit more non-trivial. Naturally, if we want our condition on the convergence of infinite compositions to be as general as possible, we'll need to require that it includes infinite sums and infinite products. Condition \ref{eq:2A} is looking like a good choice so far, as it handles both cases rather nicely.

The idea to be extrapolated from Condition \ref{eq:2A}, is it implies $\phi_j$ tends to the identity function $\text{Id}(z) = z$ fast enough to be summed, and this is enough to ensure the infinite composition of $\{\phi_j\}_{j=1}^\infty$ is convergent. Just how if $a_j$ tends to zero fast enough, the sum of $\{a_j\}_{j=1}^\infty$ is convergent; and if $b_j$ tends to $1$ fast enough to be summed, the product of $\{b_j\}_{j=1}^\infty$ converges. This leads us to a theorem that indeed shows Condition \ref{eq:2A} is sufficient for $\OmSum_{j=1}^\infty \phi_j$ to be a holomorphic function taking $\mathbb{D} \to \mathbb{D}$. Many of our proofs in this paper will sister this proof, only gaining in slight complexity each time. The author asks that the reader pays close attention to the development of this proof as it is a simpler version of the main result of this paper. Despite the looseness of our language so far, the following proof will be made as precise as possible.

\begin{theorem}\label{thm2A}
If a sequence of holomorphic functions $\{\phi_j\}_{j=1}^\infty$ taking $\mathbb{D} \to \mathbb{D}$ satisfies Condition \ref{eq:2A}, namely

\[
\sum_{j=1}^\infty ||\phi_j(z) - z||_{\mathbb{D}} < \infty
\]

then the infinite composition

\[
\Phi(z) = \OmSum_{j=1}^\infty \phi_j
\]

is a holomorphic function taking $\mathbb{D}$ to itself.
\end{theorem}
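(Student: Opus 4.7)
The plan is to show that the partial compositions $\Phi_n(z) = \phi_1(\phi_2(\cdots \phi_n(z)))$ form a uniformly Cauchy sequence on every compact subset of $\mathbb{D}$; holomorphy of the limit then follows from Weierstrass. The key identity is the recursion $\Phi_{n+1}(z) = \Phi_n(\phi_{n+1}(z))$, which reduces bounding $\Phi_{n+1}(z) - \Phi_n(z)$ to measuring how far $\Phi_n$ moves when its argument is displaced from $z$ to $\phi_{n+1}(z)$, a displacement controlled directly by the summability hypothesis (\ref{eq:2A}).

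For the Lipschitz estimate on $\Phi_n$ I would invoke the Schwarz-Pick lemma: every holomorphic self-map of $\mathbb{D}$, and in particular each $\Phi_n$, is a contraction in the pseudohyperbolic metric $\rho(a,b) = |a-b|/|1-\bar{a}b|$. Combining $\rho(\Phi_n(\phi_{n+1}(z)), \Phi_n(z)) \leq \rho(\phi_{n+1}(z), z)$ with the crude bound $|u-v| \leq 2\rho(u,v)$ on $\mathbb{D}$ yields
\[
|\Phi_{n+1}(z) - \Phi_n(z)| \leq \frac{2\,|\phi_{n+1}(z) - z|}{|1 - \overline{\phi_{n+1}(z)}\,z|}.
\]
Fix a compact $K \subset \mathbb{D}$ with $|z| \leq r < 1$ for all $z \in K$. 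Because (\ref{eq:2A}) forces $||\phi_{n+1}(z) - z||_{\mathbb{D}}$ eventually to be smaller than any prescribed $\delta > 0$, one has $|\phi_{n+1}(z)| \leq r + \delta < 1$ uniformly on $K$ for large $n$, so the denominator above is bounded below by a positive constant $c_K$. Consequently $|\Phi_{n+1}(z) - \Phi_n(z)| \leq (2/c_K)\,||\phi_{n+1}(z) - z||_{\mathbb{D}}$ uniformly on $K$ for large $n$; telescoping together with (\ref{eq:2A}) produces a uniformly Cauchy tail, so the limit $\Phi$ exists, is holomorphic on $\mathbb{D}$, and satisfies $|\Phi(z)| \leq 1$.

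The remaining issue is $\Phi(\mathbb{D}) \subseteq \mathbb{D}$ rather than merely $\overline{\mathbb{D}}$. If $\Phi$ is non-constant, the open mapping theorem closes it, since $\Phi(\mathbb{D})$ is then open and contained in $\overline{\mathbb{D}}$. To dispose of a boundary constant $\Phi \equiv c$ with $|c|=1$, I would use (\ref{eq:2A}) one more time: pick $N$ with $\sum_{j \geq N} ||\phi_j(z)-z||_{\mathbb{D}} < 1/2$. A short induction shows $\phi_N(\phi_{N+1}(\cdots \phi_m(0))) \in \{|w| \leq 1/2\}$ for every $m \geq N$; the fixed maps $\phi_1, \ldots, \phi_{N-1}$, each a holomorphic self-map of $\mathbb{D}$, then send this closed half-disk into a compact subset of $\mathbb{D}$ independent of $m$, so $|\Phi(0)| < 1$ and the boundary-constant case is ruled out.

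The real obstacle is the interplay between the pseudohyperbolic estimate (clean but intrinsic to $\mathbb{D}$) and the Euclidean norm appearing in the summability hypothesis. Converting between the two is only uniform once one knows $\phi_{n+1}(z)$ stays at a positive Euclidean distance from $\partial \mathbb{D}$ for $z$ in the prescribed compact, which is why the ``eventually large $n$'' preamble is genuinely needed, and why the global rather than local character of (\ref{eq:2A}) is what makes this approach work.
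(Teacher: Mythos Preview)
Your argument is correct but takes a genuinely different route from the paper. The paper never invokes Schwarz--Pick; instead it uses that $|\Phi_n|<1$ on $\mathbb{D}$, applies Cauchy's integral formula on a disk $|z-z_0|\le 2\delta$ to bound $|\Phi_n^{(k)}(z)|\le 2k!/\delta^k$ on the smaller disk $|z-z_0|\le\delta$, and then Taylor-expands $\Phi_n(\phi_{n+1}(z))$ about $z$ to obtain $||\Phi_{n+1}-\Phi_n||\le M\,||\phi_{n+1}-z||_{\mathbb{D}}$ with an explicit constant $M$. Your Schwarz--Pick route reaches the same telescoping bound more quickly and with less machinery, but at a cost: it is intrinsically a disk argument and does not survive the passage to arbitrary domains $\mathcal{G}\subseteq\mathbb{C}$ that the paper makes in the next section, whereas the Cauchy-estimate/Taylor method is exactly what the paper reuses (with minor modifications) for Theorem~\ref{thm3A} and Theorem~\ref{thm4A}. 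So you buy elegance here but lose portability; the paper's choice is deliberately pedagogical, setting up the template for the later, harder theorems.

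One further remark: you are more careful than the paper on the point $\Phi(\mathbb{D})\subseteq\mathbb{D}$ versus $\overline{\mathbb{D}}$. The paper simply stops after establishing locally uniform convergence and asserts the conclusion; your open-mapping/boundary-constant argument (with the inductive tail bound $|\phi_N(\cdots\phi_m(0))|\le\sum_{j\ge N}\rho_j<1/2$, which is a miniature of the paper's Lemma~\ref{lma3A}) actually closes that gap.
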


\begin{proof}
Pick a point $z_0 \in \mathbb{D}$. Pick a $\delta > 0$ so that $|z - z_0| \le 2\delta$ is a compact neighborhood about the point $z_0$ within $\mathbb{D}$. We are concerned with the function $\Phi_n(z) = \phi_1(\phi_2(...\phi_n(z)))$. Taking preliminary bounds it follows

\[
||\Phi_n(z)||_{|z-z_0|\le 2 \delta} < 1
\]

which is because $\Phi_n$ lives in $\mathbb{D}$. \`A la Cauchy, about the disk $|z - z_0| < 2\delta$ we also have

\[
\frac{d^k}{dz^k} \Phi_n(z) = \Phi_n^{(k)}(z) = \frac{k!}{2\pi i}\int_{|z-z_0|=2\delta} \frac{\Phi_n(\xi)}{(\xi - z)^{k+1}}\, d\xi
\]

Now we're going to perform a small trick and shrink our domain of interest from a disk about $z_0$ with radius $2 \delta$ to a disk about $z_0$ with radius $\delta$. This allows us to write, using supremum norms across $|z - z_0| \le \delta$.

\begin{eqnarray*}
||\Phi^{(k)}_n(z)||_{|z-z_0|\le \delta} &\le& \frac{k!}{2\pi} \int_{|z-z_0|=2\delta} \frac{|\Phi_n(\xi)|}{|\xi - z|^{k+1}}\,d\xi\\
&\le& \frac{k!}{2\pi} \int_{|z-z_0| = 2\delta} \frac{1}{\delta^{k+1}}\,d\xi\\
&\le& 2 \frac{k!}{\delta^k}\\
\end{eqnarray*}

for all $k$. Where we bounded $|\frac{1}{\xi-z}| \le \frac{1}{\delta}$ when $|\xi - z_0| = 2\delta$ and $|z - z_0| \le \delta$. Remembering that $\Phi_{n+1}(z) = \Phi_n(\phi_{n+1}(z))$, and that $|\phi_{n+1}(z)-z| < ||\phi_{n+1}(z)-z||_{\mathbb{D}}$ tends to zero as $n \to \infty$ we can choose $N$ such when $n > N$ it follows $||\phi_{n+1}(z) - z||_{\mathbb{D}} < \rho < \delta$. By Taylor's theorem, we can expand $\Phi_{n+1}(z) = \Phi_n(\phi_{n+1}(z))$ about $\Phi_n$'s Taylor series about $z$:

\begin{eqnarray*}
\Phi_{n+1}(z) - \Phi_n(z) &=& \Phi_{n}(\phi_{n+1}(z)) - \Phi_n(z)\\
&=& \sum_{k=0}^\infty \Phi_n^{(k)}(z) \frac{(\phi_{n+1}(z) - z)^k}{k!} - \Phi_n(z)\\
&=& \sum_{k=1}^\infty \Phi_n^{(k)}(z) \frac{(\phi_{n+1}(z) - z)^k}{k!}\\
\end{eqnarray*}

The above series converges for at least $|z-z_0| \le \delta$ by the bounds $|\Phi^{(k)}_n(z)| \le 2 \frac{k!}{\delta^k}$ and $|\phi_{n+1}(z) - z| < \rho < \delta$. The trick to this proof is we are going to pull out one factor of $\phi_{n+1}(z) - z$ from this expression and bound the rest by a constant.

Again, using the supremum norm across $|z - z_0| \le \delta$:

\begin{eqnarray*}
||\Phi_{n+1}(z) - \Phi_{n}(z)||_{|z-z_0|\le\delta} &\le& \sum_{k=1}^\infty ||\Phi_n^{(k)}(z)||_{|z-z_0| \le \delta} \frac{||\phi_{n+1}(z) - z||_{|z-z_0|\le\delta}^k}{k!}\\
&\le& \sum_{k=1}^\infty 2 \frac{k!}{\delta^k} \frac{||\phi_{n+1}(z) - z||^k_{\mathbb{D}}}{k!}\\
&\le& 2\sum_{k=1}^\infty \frac{||\phi_{n+1}(z) - z||^k_{\mathbb{D}}}{\delta^k}\\
&\le& 2 ||\phi_{n+1}(z) - z||_{\mathbb{D}} \sum_{k=1}^\infty \frac{\rho^{k-1}}{\delta^k}\\
&\le& \frac{2}{\delta(1- \frac{\rho}{\delta})} ||\phi_{n+1}(z) - z||_{\mathbb{D}}\\
&\le& M ||\phi_{n+1}(z) - z||_{\mathbb{D}}
\end{eqnarray*}

Here the equality $M = \frac{2}{\delta(1 - \frac{\rho}{\delta})}$ was made. The proof can be concluded by noting for all $\epsilon > 0$ there is an $L$ such that when $m \ge n > L > N$

\[
\sum_{j=n}^{m-1} ||\phi_{j+1}(z) - z||_{\mathbb{D}} < \epsilon/M
\]

which is possible by our criterion of $\phi_j$ in Condition \ref{eq:2A}. Then by a simple rearrangement

\begin{eqnarray*}
||\Phi_n - \Phi_m||_{|z-z_0|\le\delta} &\le& \sum_{j=n}^{m-1} ||\Phi_{j+1} - \Phi_j ||_{|z-z_0|\le\delta}\\
&\le& \sum_{j=n}^{m-1} M||\phi_{j+1}(z) - z||_{\mathbb{D}}\\
&<& \epsilon\\
\end{eqnarray*}

Which shows about an arbitrary point $z_0 \in \mathbb{D}$ there is a compact neighborhood $|z -z_0| \le \delta$ for $\delta > 0$ in which $\phi_n$ converges uniformly. Which gives the result.
\end{proof}

This gives us our first theorem. When $\phi_j : \mathbb{D} \to \mathbb{D}$ is a sequence of holomorphic functions that converge uniformly to the identity function fast enough, then their infinite composition converges. This is nice, it should give us hope that infinite compositions may have meaning in other cases.

The real question though: Why does this matter towards our goal of solving a modified Picard-Lindel\"{o}f equation? How do we get our Gamma function $\G_f$ from this? Without going into too much detail, the reason this matters can be summed up by the following equations. Supposing the function $f(s, z)$ of our desired equation $y(s + 1) = \mathcal{T} y = f(s, y)$ is denoted $f_s(z)$, then the function:

\[
\G(s) = \OmSum_{j=1}^\infty f_{s-j}(z) = f_{s-1}(f_{s-2}(f_{s-3}(...)))
\]

can be seen to satisfy the following

\begin{eqnarray*}
f_s(\G(s)) &=& f_s(f_{s-1}(f_{s-2}(f_{s-3}(...))))\\
&=& f_{s+1-1}(f_{s+1-2}(f_{s+1-3}(...)))\\
&=&\G(s + 1)
\end{eqnarray*}

Obviously care needs to be taken when talking about holomorphy in $s$. Which is why we've built this paper in progression. I have chosen clarity of meaning over conciseness, as young auteurs are always taught. These equations form the exploit we hope to use in the coming sections. It is the exploit of the entire paper. It is the cornerstone. It's exactly how we're going to get around. We welcome you to the wonderful world of whacky infinite compositions.

\section{Infinite compositions on arbitrary domains}\label{sec3}
\setcounter{equation}{0}

Infinite compositions can make sense on the unit disk... So what does this tell us? Well it doesn't really help get the solution to $\G = \G_f$ in general instances, but it did give us a nice stretch and warm up. The reader may hopefully forgive the author, for that was its sole purpose. This section will make a stronger version of Theorem \ref{thm2A}, and consequently obsolesce everything we just did. There is a parallel between the theorem we present in this section and the one we just gave; excepting that the proceeding proof will require more nuance and care. But what exactly are we going to prove? It is natural to wonder, since infinite compositions can make sense on $\mathbb{D}$ (the unit disk), could they work on an arbitrary domain? For the rest of this section the following schema is used: The set $\mathcal{G} \subseteq \mathbb{C}$ is open. The sequence of functions $\{\phi_j\}_{j=1}^\infty$ are holomorphic and $\phi_j : \mathcal{G} \to \mathcal{G}$. And most importantly, our main condition in this section will be: if $\mathcal{K}$ is a compact disk within $\mathcal{G}$ then

\begin{equation}\label{eq:3A}
\sum_{j=1}^\infty \sup_{z \in \mathcal{K}} |\phi_j(z) - z| = \sum_{j=1}^\infty ||\phi_j(z) - z||_{\mathcal{K}} < \infty
\end{equation}

This can equivalently be rephrased as the sum $\sum_{j=1}^\infty \phi_j (z) - z$ converges compactly normally on the set $\mathcal{G}$. This terminology is home to Remmert Reinhold in \cite{ref14, ref15}. It affords us a quicker way of discussing our summability criterion. Normal convergence on compact subsets is crucial; the language was chosen as it shortens some of the proofs substantially. It is natural, as an added fact; in most circumstances when we talk about absolute convergence of a sum of functions the sum tends to actually be compactly normally convergent. Insofar as, if one happens to use the Weierstrass M-test when proving the absolute convergence of a sum, one has shown the sum is in fact compactly normally convergent. We could have simply chosen the condition that $\sum_{j=1}^\infty |\phi_j(z) - z| < \infty$ for all $z \in \mathcal{G}$ but this afforded longer, more intricate and complicated proofs to follow--and what the author felt were quite a few dead-ends.

\begin{remark} Of all the written literature the author has found on infinite compositions, Condition \ref{eq:3A} happens to be the weakest condition to guarantee convergence. He finds this condition very natural, as to talk of infinite composition, one needs only examine the convergence of a sum--and that sum is not exotic in the least. Professor John Gill comes close to a similar criterion in \cite{ref9}. Our criterion happens to imply Gill's but not the other way around. Though similar, John Gill frames the language in terms of Lipschitz conditions and is more restrictive; though it does still rely on a summability criterion. His exposition also fails to consider arbitrary sequences of functions and focuses on the case where $\phi_j (z) = z + \rho_j \psi(z)$ where $\psi$ is an arbitrary function and $\rho_j$ is a summable sequence.
\end{remark}

Condition \ref{eq:3A} is enough for the function $\OmSum_{j=1}^\infty \phi_j$ to be a holomorphic function, and take $\mathcal{G} \to \mathcal{G}$. This is a slightly altered summability criterion than the one we saw in Condition \ref{eq:2A}, it does not hold on all of $\mathcal{G}$, only on compact disks within $\mathcal{G}$.

This causes a bit of trouble that requires us to tweak our approach from Theorem \ref{thm2A}. To begin we'll need a lemma that tends to be taken for granted when talking about infinite sums and infinite products, which allows us to generalize to arbitrary  domains. This lemma wasn't needed in the first section because we had the benefit of $\sup_{z \in \mathbb{D}} |\phi_j(z) - z|$ being a summable sequence. In this section, it is comparable, excepting we have a weaker summability criterion. We do not have that $\sup_{z \in \mathcal{G}} |\phi_j (z) - z|$ is a summable sequence. We only have $\sup_{z \in \mathcal{K}} |\phi_j (z) - z|$ is a summable sequence for any compact disk $\mathcal{K}$ within $\mathcal{G}$. Since $\mathcal{G}$ could very well be all of $\mathbb{C}$ we are in a tighter bind here. This is going to cause some trouble. 

To elucidate why, since $\mathcal{G}$ need not be bounded, the sequence $\phi_n = \phi_1(\phi_2(...\phi_n(z)))$ need not be bounded on $\mathcal{G}$. In Theorem \ref{thm2A} this sequence was bounded by $1$. In this schema, the functions $\phi_j$ are bounded on $\mathcal{K}$ but do not necessarily take the compact disk $\mathcal{K}$ to itself (unlike how previously, $\phi_j : \mathbb{D} \to \mathbb{D}$ and $|\phi_j (z)| < 1$), so \`{a} priori we have no bound on the sequence of terms $\phi_1(\phi_2(...\phi_n(z)))$. Can we modify our approach so that the sequence $\Phi_n$ is bounded on $\mathcal{K}$?

What we can prove, is for all $\rho > 0$ there is an $N$ such for all $m \ge n > N$ the composition $\OmSum_{j=n}^m \phi_j$ is at most $\rho$ distance from $z$. This tells us we can find a larger compact disk about a neighborhood $|z - z_0| < \delta$, call it $\mathcal{K}$ in which $\OmSum_{j=n}^m \phi_j \Big{|}_{|z-z_0|<\delta} \subset \mathcal{K}$ for all $m \ge n > N$. This allows us to ground ourselves and unilaterally bound $||\OmSum_{j=N}^m \phi_j ||_{|z-z_0|\le\delta}$ by a bound on $\mathcal{K}$. In the last section we got this for free. We bounded $||\OmSum_{j=1}^n \phi_j|| < 1$ like it was nothing. In this case $\phi_j : \mathcal{G} \to \mathcal{G}$ and no such bound on $\mathcal{G}$ necessarily exists (encore, considering $\mathcal{G}$ may even be all of $\mathbb{C}$), so such an approach will not work. We'll have to be more clever. 

To add even more intuition to the following lemma we can again look at sums and products. If $\sum_{j=1}^\infty a_j$ converges, then for all $\rho > 0$ there is an $N$ such when $m \ge n > N$ it follows $|\sum_{j=n}^m a_j - 0| <\rho$; which is just the definition of summability. Similarly for sequences $b_j$, if $\prod_{j=1}^\infty b_j$ converges absolutely, then there is an $N$ such when $m \ge n > N$ it follows $|\prod_{j=n}^m b_j - 1| < \rho$. Both $0$ and $1$ are neutral identity elements in these scenarios. Any number added by $0$ is itself. Any number multiplied by $1$ is itself. In the compositional case, any function composed with $\text{Id}(z) = z$ is itself. It should seem natural then, for us to want $|\phi_n(\phi_{n+1}(...\phi_m(z)))-z| = |\OmSum_{j=n}^m \phi_j -z| < \rho$ for large enough $N$ with $m \ge n > N$ on an arbitrary compact disk $\mathcal{K} \subset \mathcal{G}$. Showing Condition 3.1 allows us to say this involves a careful argument.

\begin{lemma}\label{lma3A}
 If $\{\phi_j\}_{j=1}^\infty$ is a sequence of holomorphic functions taking an open set $\mathcal{G}\subseteq \mathbb{C}$ to itself and if the sequence satisfies Condition \ref{eq:3A}, namely

\[
\sum_{j=1}^\infty ||\phi_j(z) - z||_{\mathcal{K}} <\infty
\]

for any compact disk $\mathcal{K}$ within $\mathcal{G}$, then for all $\rho > 0$ there exists $N$ such when $m \ge n > N$

\[
||\OmSum_{j=n}^m \phi_j - z||_{\mathcal{K}} < \rho
\]

\end{lemma}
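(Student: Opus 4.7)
The plan is to use a buffer trick: enlarge $\mathcal{K}$ slightly to a larger compact disk $\mathcal{K}'$ still inside $\mathcal{G}$, apply Condition \ref{eq:3A} on $\mathcal{K}'$ rather than on $\mathcal{K}$, and use the extra room as slack in which the intermediate iterates are allowed to wander without leaving $\mathcal{G}$. Concretely, write $\mathcal{K} = \{w : |w-c| \le R\}$; since $\mathcal{G}$ is open and $\mathcal{K}$ is compact, there exists $r > 0$ with $\mathcal{K}' = \{w : |w-c| \le R + r\} \subset \mathcal{G}$. Applying Condition \ref{eq:3A} on $\mathcal{K}'$ lets me pick $N$ so large that $\sum_{j > N} \|\phi_j(z) - z\|_{\mathcal{K}'} < \min(r, \rho)$.

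Next, fix $m \ge n > N$ and $z \in \mathcal{K}$ and build iterates from the inside out: $u_{m+1} = z$ and $u_k = \phi_k(u_{k+1})$ for $n \le k \le m$, so that $\OmSum_{j=n}^m \phi_j$ evaluated at $z$ equals $u_n$. By reverse induction on $k$, descending from $k = m+1$ to $k = n$, I intend to show simultaneously that $u_k \in \mathcal{K}'$ and $|u_k - z| \le \sum_{j = k}^m \|\phi_j(z) - z\|_{\mathcal{K}'}$. The base case is immediate since $u_{m+1} = z \in \mathcal{K} \subset \mathcal{K}'$. For the step, if $u_{k+1} \in \mathcal{K}'$, then $|u_k - u_{k+1}| = |\phi_k(u_{k+1}) - u_{k+1}| \le \|\phi_k(z) - z\|_{\mathcal{K}'}$, and the triangle inequality combined with the inductive bound on $|u_{k+1} - z|$ gives the updated estimate; meanwhile that estimate is bounded above by the full tail, which is less than $r$, so $u_k$ still lies in the closed disk of radius $R + r$ about $c$, namely in $\mathcal{K}'$. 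Setting $k = n$ gives $|u_n - z| < \rho$ uniformly in $z \in \mathcal{K}$, which is the desired conclusion.

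The main obstacle is a circularity baked into the setup: to bound $|\phi_k(u_{k+1}) - u_{k+1}|$ via the supremum norm on $\mathcal{K}'$, one needs $u_{k+1} \in \mathcal{K}'$ in the first place; but $u_{k+1}$ is itself the outcome of many prior compositions whose excursions from $\mathcal{K}$ one does not control a priori. The buffer width $r$ is precisely what severs this circle: by forcing the tail of the summability estimate to be smaller than $r$, the induction self-propagates, trapping every intermediate iterate inside $\mathcal{K}'$, which is the set where the summability hypothesis was assumed. This is exactly the subtlety the author flags before the statement --- in Theorem \ref{thm2A} the containment $\phi_j : \mathbb{D} \to \mathbb{D}$ supplied such a trapping region for free, whereas here one has to manufacture it by shrinking the domain on which the conclusion holds.
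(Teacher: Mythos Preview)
Your proof is correct and rests on the same core idea as the paper's: enlarge the compact disk to a buffer region, choose $N$ so the tail of the summability series is smaller than the buffer width, and then prove by induction that every intermediate iterate stays trapped in the buffer. Your execution is somewhat more streamlined than the paper's in two respects. First, the paper works locally---for each $z_0\in\mathcal G$ it proves the estimate on a small disk $|z-z_0|<\delta$ inside a larger disk of radius $P$, and only at the end covers a given $\mathcal K$ by finitely many such small disks---whereas you enlarge the given $\mathcal K$ outward to $\mathcal K'$ and obtain the conclusion on all of $\mathcal K$ directly, avoiding the covering step. Second, the paper uses strong induction on the width $m-n$ and re-telescopes the entire composition at each step, while your simple backward induction on $k$ (adding one $\phi_k$ at a time) achieves the same bound with less bookkeeping. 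Neither change alters the substance of the argument, but both are genuine simplifications.
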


\begin{remark}
The following is an odd proof by induction. The author could not find an alternate proof despite attempts to simplify the reasoning. It is a rare thing to see proofs by induction in analysis done in the following manner, but the author reminds the reader that the strangeness of the object requires a strangeness in the approach. He attempted to maintain as much clarity as possible while not making the proof overly lengthy. The complexity of this proof resides most apparently in the discussion of domains, made particular in the induction process.
\end{remark}

\begin{proof}
Choose a $z_0 \in \mathcal{G}$. Choose $\mathcal{K}$ a compact disk $|z - z_0| \le P$ for some $P > 0$ within $\mathcal{G}$. Choose a $\delta > 0$ such that the points $|z - z_0| < \delta < P$ live in the interior of $\mathcal{K}$. Let $\rho_j = ||\phi_j(z) - z||_{\mathcal{K}}$. By Condition \ref{eq:3A} it is given

\[
\sum_{j=1}^\infty \rho_j < \infty
\]

We are going to take the tail of this series and manipulate it in a significant way. Choose $N$ so that $\sum_{j=N+1}^\infty \rho_j < \rho$ is as small as desired so the points $|z - z_0| < \delta + \rho < P$ still reside in the interior of $\mathcal{K}$. We denote $F_{nm} = \OmSum_{j=n}^m \phi_j = \phi_n(\phi_{n+1}(...\phi_m(z)))$. We want to show that for all $|z -z_0| < \delta$ there exists $N$ such when $m \ge n > N$ the inequality $|F_{nm}(z)-z| < \rho$ is satisfied. This in conjunction, shows for $N$ large enough if $m \ge n > N$ then $F_{nm}(z)$ is in a small enough disk about $z_0$ which still resides within $\mathcal{K}$; because $|F_{nm}(z) - z_0| < |z - z_0| + |F_{nm}(z) - z| < \delta + \rho < P$. The proof goes by using strong induction on the difference $m -n = k$; starting with the base-case $m = n$ (or $k = 0$). Recalling $F_{nn}(z) = \phi_n(z)$, this follows forwardly:

\[
|F_{nn}(z) - z| = |\phi_n(z) - z| \le ||\phi_n(z) - z||_{\mathcal{K}} = \rho_n \le \sum_{j=N+1}^\infty \rho_j < \rho
\]

Just as well by the triangle inequality $|F_{nn}(z) - z_0| \le |F_{nn}(z) - z| + |z - z_0| < \rho + \delta < P$. So $F_{nn}(z) = \phi_n(z)$ is in $\mathcal{K}$ while $|z-z_0| < \delta$ and $n > N$. For the case $m = n+1$ (when $k = 1$), the result follows, remembering the functional equation $F_{nm}(z) = \phi_n(F_{(n+1)m}(z))$. We are going to use a telescoping idea that should shed light on how the general induction process will go. We want to show $|F_{n(n+1)}(z) - z| < \rho$ using the above.

\begin{eqnarray*}
|F_{n(n+1)}(z) - z| &\le& |F_{n(n+1)}(z) - F_{(n+1)(n+1)}(z)| + |F_{(n+1)(n+1)}(z) - z|\\
&=& |\phi_n(F_{(n+1)(n+1)}) - F_{(n+1)(n+1)}| + |\phi_{n+1}(z) - z|\\
&\le& ||\phi_n(z) - z||_{\mathcal{K}} + ||\phi_{n+1}(z) - z||_{\mathcal{K}}\\
&=& \rho_n + \rho_{n+1}\\
&\le& \sum_{j=N+1}^\infty \rho_j < \rho\\
\end{eqnarray*}

In the above, we legally bounded $|\phi_n(F_{(n+1)(n+1)}) - F_{(n+1)(n+1)}|$ by $\rho_n = ||\phi_n(z) - z||_{\mathcal{K}}$ because $F_{jj} \in \mathcal{K}$ for all $j > N$ when $|z - z_0| < \delta$, which was just shown. After this, we bounded $\rho_n + \rho_{n+1}$ by $\rho > \sum_{j=N+1}^\infty \rho_j$. Similarly now, $|F_{n(n+1)}(z) - z_0| < \rho + \delta < P$ by the triangle inequality and therefore $F_{n(n+1)}$ still resides in the compact set $\mathcal{K}$. That's is the trick of this proof by induction. So when $m - n = 1$, the function $F_{nm}(z)$ lives inside $\mathcal{K}$ when $|z - z_0| < \delta$ and satisfies $|F_{nm}(z) - z| < \rho$ when $|z - z_0| < \delta.$

Continuing in this fashion, we will apply strong induction. Assume for all $m \ge n > N$ and $m - n = j < k$ that $|F_{nm}(z) - z| = |F_{n(n+j)} - z| < \rho$. Which vacuously shows, when $m-n < k$ and $m \ge n > N$ that $|F_{nm}(z)-z_0| < |F_{nm}(z)-z| + |z - z_0| < \rho + \delta < P$ and $F_{nm} = F_{n(n+j)} \in \mathcal{K}$. The inductive step is to show this implies when $m - n = k$ that $|F_{nm}(z) - z| = |F_{n(n+k)}(z) - z| < \rho$ and $F_{nm} = F_{n(n+k)} \in  \mathcal{K}$ while $m \ge n > N$ and $|z - z_0| < \delta$. The result will follow by strong induction. Well then, to do such, do what was done above but with more terms. We are going to telescope again. Again, a key thing to remember is the identity $\phi_n(F_{(n+1)m}(z)) = F_{nm}(z)$, we are going to abuse it.

\begin{eqnarray*}
|F_{n(n+k)}(z) - z| &\le& \sum_{j=0}^{k-1} |F_{(n+j)(n+k)} - F_{(n+j+1)(n+k)}| + |F_{(n+k)(n+k)} - z|\\
&=& \sum_{j=0}^{k-1} |\phi_{n+j}(F_{(n+j+1)(n+k)}) - F_{(n+j+1)(n+k)}| + |\phi_{n+k}(z) - z|\\
&<& \sum_{j=0}^k \rho_{n+j}\\
&\le& \sum_{j=N+1}^\infty \rho_j < \rho\\
\end{eqnarray*}

The term $|\phi_{n+j} (F_{(n+j+1)(n+k)})-F_{(n+j+1)(n+k)}|$ was bounded by $\rho_{n+j} = ||\phi_{n+j} (z)- z||_{\mathcal{K}}$ because $F_{(n+j+1)(n+k)} \in \mathcal{K}$. This is covered by the induction hypothesis because $(n + k) - (n + j + 1) = k - j - 1 < k$. Furthering this reasoning $|F_{nm}(z) - z_0| = |F_{n(n+k)}(z) - z_0| < \rho + \delta < P$ and so $F_{nm} = F_{n(n+k)} \in \mathcal{K}$. By strong induction therefore $|F_{nm}(z) - z| < \rho$ for all $m \ge n > N$ when $|z - z_0| < \delta$. About every point $z_0 \in \mathcal{G}$ there is a disk $|z - z_0| < \delta$ where $|F_{nm}(z) - z| < \rho$ for large enough $N$. Since every compact disk $\mathcal{K} \subset \mathcal{G}$ can be covered by a finite number of disks within $\mathcal{G}$, for all $\rho > 0$ there is an $N'$ such when $m \ge n > N'$ it follows $|F_{nm}(z) - z| < \rho$ on $\mathcal{K}$.
\end{proof}

This lemma is the stepping stone towards showing infinite compositions on arbitrary domains do make sense, and can converge. The following theorem will follow exactly as its sister Theorem \ref{thm2A} did in Section \ref{sec2}; only we'll be talking about
the tail of the composition.

\begin{remark}
It's interesting to note the following theorem provides an alternate proof of the convergence of infinite products. Namely, that if $\sum_{j=1}^\infty |b_j - 1| < \infty$ then $\prod_{j=1}^\infty b_j$ converges. The theorem below is a bit of an overkill in such a circumstance, but does work. This is interesting to point out because the proof below never uses the inequality $|\log(1 + z)| < |z|$ which is rampant when one typically proves this result.
\end{remark}

This proof is almost exactly the proof of Theorem \ref{thm2A}. We have only added a dash of complexity. The heart of it remains the same though. If this paper were a TV show, this would be the writing phase of production.

\begin{theorem}\label{thm3A}
 If $\{\phi_j\}_{j=1}^\infty$ is a sequence of holomorphic functions taking an open set $\mathcal{G} \subseteq \mathbb{C}$ to itself, and if the sequence satisfies Condition \ref{eq:3A}, as in

\[
\sum_{j=1}^\infty ||\phi_j(z) - z||_{\mathcal{K}} < \infty
\]

for any compact disk $\mathcal{K} \subset \mathcal{G}$, then the infinite composition 

\[
\Phi(z) = \OmSum_{j=1}^\infty \phi_j
\]

is a holomorphic function taking $\mathcal{G}$ to itself.
\end{theorem}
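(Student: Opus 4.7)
The plan is to follow the outline of Theorem \ref{thm2A}: bound the derivatives of the partial compositions $\Phi_n = \OmSum_{j=1}^n \phi_j$ via Cauchy's integral formula, Taylor expand the difference $\Phi_{n+1}-\Phi_n$, factor out one copy of $\phi_{n+1}(z)-z$, and conclude Cauchy convergence from Condition \ref{eq:3A}. The honest obstacle is that we no longer have the free uniform bound $|\Phi_n|\le 1$ that was available on the unit disk; since $\mathcal{G}$ may be all of $\mathbb{C}$, the iterates $\Phi_n$ need not be bounded on a compact disk $\mathcal{K}\subset\mathcal{G}$. Lemma \ref{lma3A} is the tool purpose-built to overcome this.

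First I would fix $z_0\in\mathcal{G}$ and choose $P>0$ so that the closed disk $|z-z_0|\le 4P$ lies in $\mathcal{G}$; call it $\mathcal{K}$. Applying Lemma \ref{lma3A} with some $\rho < P$ produces an $N$ such that $m\ge n > N$ implies $||F_{n,m}(z)-z||_{\mathcal{K}} < \rho$, where $F_{n,m}=\OmSum_{j=n}^m \phi_j$. Hence for $|z-z_0|\le 2P$ the tail satisfies $|F_{n,m}(z) - z_0| \le 2P + \rho < 3P$, so $F_{n,m}(z)\in\mathcal{K}$. Now factor $\Phi_m(z) = \Phi_N(F_{N+1,m}(z))$ for every $m > N$; since $\Phi_N$ is a single fixed holomorphic map, it is bounded on $\mathcal{K}$ by some $M$, and therefore $|\Phi_m(z)|\le M$ uniformly in $m > N$ for $|z-z_0|\le 2P$. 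This is the honest replacement for the cheap bound used in Theorem \ref{thm2A}.

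With this in place the rest of the argument is almost a transcription of the earlier proof. Cauchy's formula on the circle $|\xi - z_0| = 2P$ yields $|\Phi_m^{(k)}(z)|\le 2Mk!/P^k$ on $|z-z_0|\le P$ for all $m > N$. Enlarging $N$ if needed so that $||\phi_{n+1}-z||_{\mathcal{K}} < P/2$ for $n\ge N$, I would Taylor-expand
\[
\Phi_{n+1}(z)-\Phi_n(z) \;=\; \Phi_n(\phi_{n+1}(z))-\Phi_n(z) \;=\; \sum_{k=1}^\infty \Phi_n^{(k)}(z)\frac{(\phi_{n+1}(z)-z)^k}{k!},
\]
pull out one power of $\phi_{n+1}(z)-z$, sum the resulting geometric series, and land at a bound of the form $||\Phi_{n+1}-\Phi_n||_{|z-z_0|\le P} \le M'\,||\phi_{n+1}-z||_{\mathcal{K}}$. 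Telescoping $\Phi_m - \Phi_n$ and invoking Condition \ref{eq:3A} then shows $(\Phi_n)$ is uniformly Cauchy on $|z-z_0|\le P$, so the limit $\Phi$ is holomorphic there by Weierstrass. A finite-disk cover of any compact $\mathcal{K}'\subset\mathcal{G}$ promotes this local uniform convergence to uniform convergence on compact subsets of $\mathcal{G}$.

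To close, I would verify $\Phi(\mathcal{G})\subseteq\mathcal{G}$ from the same factorisation: $\Phi_m(z)$ sits in the compact set $\Phi_N(\mathcal{K})\subset\mathcal{G}$ for every $m > N$ and $z$ near $z_0$, and this closed set contains the pointwise limit $\Phi(z)$. The only genuinely new ingredient compared with Theorem \ref{thm2A} is the use of Lemma \ref{lma3A} to resurrect a uniform bound on the partial compositions; everything afterwards is essentially bookkeeping with slightly larger constants.
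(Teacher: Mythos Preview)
Your proposal is correct and follows essentially the same approach as the paper: invoke Lemma \ref{lma3A} to confine the tail inside $\mathcal{K}$, get Cauchy derivative bounds, Taylor expand the difference, pull out one factor of $\phi_{n+1}(z)-z$, and telescope. The only cosmetic difference is that the paper runs the Cauchy/Taylor estimate directly on the tail $F_{Nm}$ (which Lemma \ref{lma3A} already bounds) and then pushes the limit through the fixed prefix $\phi_1\circ\cdots\circ\phi_{N-1}$ by uniform continuity, whereas you first use that same factorisation $\Phi_m=\Phi_N\circ F_{N+1,m}$ to bound the full $\Phi_m$ and carry out the estimate there; the two organisations are equivalent in content and effort.
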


\begin{proof}
Choose a compact disk $\mathcal{K} \subset \mathcal{G}$ about an arbitrary point $z_0 \in \mathcal{G}$ with radius $P$, and choose a $\delta > 0$ such that $|z - z_0| < 2\delta < P$, so that such $z$ still reside in $\mathcal{K}$. Choose $\rho$ small enough such that $|z - z_0| < 2\delta + \rho < P$, and $z$ still resides in the interior of $\mathcal{K}$. Denote $F_{nm}(z) = \OmSum_{j=n}^m \phi_j = \phi_n(\phi_{n+1}(...\phi_m(z)))$. By Lemma \ref{lma3A}, $N$ can be chosen large enough so when $m \ge n \ge N$, $|F_{nm}(z)-z| < \rho$ for all $|z - z_0| < 2\delta$. By the triangle inequality:

\[
|F_{nm}(z) - z_0| < |z - z_0| + |F_{nm}(z) - z| < 2\delta + \rho < P
\]

for all $|z - z_0| < 2\delta$. Because of this when $m \ge n \ge N$ 

\[
||F_{nm}(z)||_{|z-z_0|\le 2\delta} \le P + |z_0| = A
\]

Taking the contour integral representation of $\frac{d^k}{dz^k}F_{nm}(z) = F^{(k)}_{nm}(z)$ we get the
expression

\[
F^{(k)}_{nm}(z) = \frac{k!}{2\pi i} \int_{|z-z_0|=2\delta} \frac{F_{nm}(\xi)}{(\xi - z)^{k+1}}\,d\xi
\]

So when we shrink the neighborhood from $|z - z_0| \le 2\delta$ to $|z - z_0| \le \delta$ and taking the supremum norm of the above expression it follows,

\begin{eqnarray*}
||F^{(k)}_{nm}(z)||_{|z-z_0|\le\delta} &\le& \frac{k!}{2\pi} \int_{|z-z_0|=2\delta} \frac{F_{nm}(\xi)}{||\xi - z||_{|z-z_0|\le\delta}^{k+1}}\,d\xi\\
&\le&\frac{k!}{2\pi} \int_{|z-z_0|=2\delta} \frac{A}{\delta^{k+1}}\,d\xi\\
&\le& 2 A \frac{k!}{\delta^k}\\
\end{eqnarray*}

Implicitly we took $\frac{1}{|\xi - z|} \le \frac{1}{\delta}$ when when $|\xi - z_0| = 2\delta$ and $|z - z_0| \le \delta$. From this point out in the proof it is possible to proceed as we did in Theorem \ref{thm2A}. It is important to note that $\rho$ chosen at the beginning of the proof should now be chosen so that $\rho < \delta$. Choose N large enough so when $m \ge N$, $\rho_{m+1} = ||\phi_{m+1}(z) - z||_{\mathcal{K}} < \rho < \delta$.\footnote{We are kind of overriding the value $\rho$ here, but it does save space.}

We use Taylor's theorem. Expanding $F_{Nm}(\phi_{m+1}(z))$ about $F_{Nm}$'s Taylor series around $z$.

\begin{eqnarray*}
F_{N(m+1)}(z) - F_{Nm}(z) &=& F_{Nm}(\phi_{m+1}(z)) - F_{Nm}(z)\\
&=& \sum_{k=1}^\infty F^{(k)}_{Nm}(z) \frac{(\phi_{m+1}(z) - z)^{k}}{k!}
\end{eqnarray*}

This series has a domain of convergence of at least $|z - z_0| \le \delta$ by the bounds $|F^{(k)}_{Nm}(z)| \le 2 A \frac{k!}{\delta^k}$ and $|\phi_{m+1}(z) - z| < \rho < \delta$. Now to do the same trick as in Theorem \ref{thm2A}. We are going to pull out one factor of $\phi_{m+1}(z) - z$ from this expression and bound the rest by a constant. Let's begin:

\begin{eqnarray*}
||F_{N(m+1)}(z) - F_{Nm}(z)||_{|z-z_0|\le\delta} &\le& \sum_{k=1}^\infty ||F^{(k)}_{Nm}(z)||_{|z-z_0|\le\delta}  \frac{||\phi_{m+1}(z) - z||_{|z-z_0|\le\delta}^{k}}{k!}\\
&\le&\sum_{k=1}^\infty 2 A \frac{k!}{\delta^k}  \frac{||\phi_{m+1}(z) - z||_{\mathcal{K}}^{k}}{k!}\\
&\le& 2 A\sum_{k=1}^\infty \frac{||\phi_{m+1}(z) - z||_{\mathcal{K}}^{k}}{\delta^k}\\
&\le&2 A ||\phi_{m+1}(z) - z||_{\mathcal{K}} \sum_{k=1}^\infty \frac{\rho^{k-1}}{\delta^k}\\
&\le& \frac{2A}{\delta(1-\frac{\rho}{\delta})}||\phi_{m+1}(z) - z||_{\mathcal{K}}\\
&=& M||\phi_{m+1}(z) - z||_{\mathcal{K}}\\
\end{eqnarray*}

Where we let $M =\frac{2A}{\delta(1-\frac{\rho}{\delta})}$. Now, by Condition \ref{eq:3A}, and the hypothesis of the theorem: for all $\epsilon > 0$ there exists an $L > N$ such that when $m \ge l > L$ we have

\[
\sum_{j=l}^{m-1} ||\phi_j(z) - z||_{\mathcal{K}} < \epsilon / M\\
\]

and therefore

\begin{eqnarray*}
||F_{Nm}(z) - F_{Nl}(z)||_{|z-z_0|\le\delta} &\le& \sum_{j=l}^{m-1} ||F_{Nj} - F_{N(j+1)}||_{|z-z_0|\le\delta}\\
&\le& \sum_{j=l}^{m-1} M||\phi_j(z) - z||_{\mathcal{K}}\\
&<& \epsilon\\
\end{eqnarray*}

Therefore $F_{Nm}$ converges uniformly on $|z - z_0| \le \delta$ as $m \to \infty$. Let $\Phi_m(z) = \phi_1(\phi_2(...\phi_{N-1}(F_{Nm}(z))))$ then $\Phi_m$ converges uniformly by the uniform continuity of the holomorphic function $\phi_1(\phi_2(...\phi_{N-1}(z)))$ on $\mathcal{K}$. Which follows because continuous functions take compact sets to compact sets and a holomorphic function restricted to a compact set is uniformly continuous there. Thus, about every point $z_0$ in $\mathcal{G}$ there is a compact disk $|z - z_0| \le \delta$ for $\delta > 0$ where $\Phi_m$ converges uniformly. Therefore $\Phi(z) = \lim_{m\to\infty}\Phi_m(z)$ is a holomorphic function taking $\mathcal{G} \to \mathcal{G}$.
\end{proof}

So we can say it clearly now with little stigma. If $\phi_j(z)$ is a sequence of holomorphic functions that tend to the identity function fast enough to be summed, then their infinite composition makes sense. Or that, in no trivial a manner, there are a broad class of instances where the function

\[
\OmSum_{j=1}^\infty \phi_j
\]

makes perfect sense. And not only makes sense, but is holomorphic. Pray, what should we do with this knowledge first?

\section{A solution to a Picard-Lindel\"{o}f problem}\label{sec4}
\setcounter{equation}{0}

\begin{remark} 
Abandon all hope ye who enter here.
\end{remark}

Proposition: There exists an entire function $\U(s)$ that solves the First Order Difference Equation $\Delta y = y(s+1)-y(s) = e^{sy(s)}$. Claim: Not only does it exist, it can be constructed. Corollary: There are many First Order Difference Equations where this is possible.

When the author was asked why this $\U$ function mattered, the author thought: `The only reason a mathematician would try to find this function is because they could, and the only reason it matters is because no one else could.' So it was with an act of absurdism and egotism that this function was invented. But it is just such a unique function that constructing it should be fun and enlightening. 

First things first, let's pull a rabbit out of the hat. Let's write the equation for the function down first, and explain why it's the right one. Let's express our $\U$ which satisfies $\U(s + 1) - \U(s) = e^{s\U(s)}$ and then let's prove it's a meaningful thing.

The trouble is though, with our current notation there is no `nice' way to write down the equation. In circumstances like this, I like to turn to Euler, who would make up notation as he was going. We're also going to take a note from Leibniz with this one. When writing the integral Leibniz made a point of closing the end of the integral with a $dx$ to mark which variable he was integrating across. This is very handy for when we're integrating with multiple variables, or when the definite integral itself depends on a variable not being integrated. I.e: $\int_0^\infty e^{-sy}\,dy$ depends on $s$, $\int_0^\infty e^{-sy}\,ds$ depends on $y$, and $\int_0^\infty e^{-sy}$ is just unclear.

We'll need something similar for infinite compositions. However, Leibniz had a nice philosophy and intuition for his exact use of $dx$; the author really doesn't have a philosophy attached to his infinite compositions. He just likes the idea of adding a term at the end to make it clear which variable we are infinitely composing across; like closing the end of a bracket. To give justification to this new notation, and why we'll need it: consider a sequence of two variable functions $\phi_j(s, z)$ and write

\[
\OmSum_{j=1}^n \phi_j(s,z)
\]

Does this mean we compose across $z$?

\[
\OmSum_{j=1}^n \phi_j(s,z) = \phi_1(s, \phi_2(s, \phi_3(s, ...\phi_n(s, z))))
\]

Or does it mean we compose across $s$?

\[
\OmSum_{j=1}^n \phi_j(s,z) = \phi_1(\phi_2(\phi_3(...\phi_n(s, z), z), z), z)
\]

Our notation is apparently unclear in such a circumstance. To this end we're going to add a little pencil-in to tell us which variable we're composing across. The author likes to use the following schema:

\begin{equation}
\OmSum_{j=1}^n \phi_j(s,z) \bullet z = \phi_1(s, \phi_2(s, \phi_3(s, ...\phi_n(s, z))))
\label{eq:4A}
\end{equation}

\begin{equation}
\OmSum_{j=1}^n \phi_j(s,z) \bullet s = \phi_1(\phi_2(\phi_3(...\phi_n(s, z), z), z), z)
\label{eq:4B}
\end{equation}

Wherein, the variable which appears after the bullet-point is the variable that is being composed across. This solves our qualm--as patchwork as it is. But with that cleared up, the equation for $\U(s)$ is elementary to write out. Making sense of it may be a hopeless affair at first, but it does get the job done well enough. Assuming $\Re(z) > 0$ and $s \in \mathbb{C}$,

\begin{equation}
\U(s,z) = \OmSum_{j=1}^\infty z + e^{(s-j)z} \bullet z
\label{eq:4C}
\end{equation}

This is going to be a nasty function to deal with, as the reader can no doubt guess. For our convenience as we progress, we're going to let $\phi_j(s,z) = z +e^{(s-j)z}$. This will ease our transition towards the general case, though only in spirit. If the reader has not absorbed our notation by this point, it will be difficult to visualize the function $\U$. The author will try one more time to anchor the absurdity of this expression in a traditional form, and make our notation seem more concrete. If we define the sequence of composite functions $\U_n = \phi_1(s, \phi_2(s, ...\phi_n(s, z)))$, where again $\phi_j(s,z) = z +e^{(s-j)z}$, then the first terms of the sequence can be writ as follows:

\begin{eqnarray*}
\U_1(s, z) &=& z + e^{(s-1)z}\\
\U_2(s, z) &=& z + e^{(s-2)z} + e^{(s-1)(z+e^{(s-2)z})}\\
\U_3(s, z) &=& z + e^{(s-3)z} + e^{(s-2)(z+e^{(s-3)z})} + e^{(s-1)(z+e^{(s-3)z}+e^{(s-2)(z+e^{(s-3)z)}})}\\
&\vdots&\\
\end{eqnarray*}

As we can see this reduces to chaos as we continue. You may begin to see why we like the clean notation of Equation \ref{eq:4C}. Also in this form it is not apparent in the slightest that the limit of these functions will be the solution of our difference equation. Considering our function in the beginning of the discussion was $f(s, z) = z+e^{sz}$, and $\phi_j(s,z) = f(s - j, z)$, we can see why $\U$ might be our desired function. Rewriting the sequence of terms for $\U_n$:

\[
\U_n(s, z) = \OmSum_{j=1}^n f(s-j, z) \bullet z = f(s - 1, f(s - 2, ...f(s - n, z)))
\]

And so

\begin{eqnarray*}
\U_n(s, z) + e^{s\U_n(s,z)} &=& f(s, \U_n(s, z))\\
&=& f(s, f(s - 1, f(s - 2, ...f(s - n, z))))\\
&=& f(s + 1 - 1, f(s + 1 - 2, ...f(s + 1 - (n + 1), z)))\\
&=& \U_{n+1}(s + 1, z)\\
\end{eqnarray*}

We can make an educated guess that the limit as $n \to \infty$ should be a solution to our modified Picard-Lindel\"{o}f problem. To digress momentarily here, the $z$-value in Equation \ref{eq:4C} serves as a parameter. It can be fixed to any value $z \in \mathbb{C}_{\Re(z) > 0}$. For each $\Re(z) > 0$ there is a different $\U$, wherein it solves our first order difference equation $\Delta y = e^{sy}$, or $\U(s+ 1)-\U(s) = e^{s\U(s)}$.

\begin{remark} 
Although not dealt with in this paper, the $z$-parameter determines the initial condition of the equation $\Delta y = e^{sy}$; our value at $y(0) = \omega$. There does exist a closed form expression for $z$ dependent on $\omega = \U(0, z)$, but this involves the other kind of infinite composition mentioned in the introduction. If you think this is messy, the initial condition problem makes the above look elegant. The author purposefully left it out as he found it spurious and unnecessary.
\end{remark}

For the first pitch when trying to show our expression for $\U$ in Equation \ref{eq:4C} converges we get something slow and steady right over the plate. When $\mathcal{K}$ is a compact disk within the right half plane $\mathbb{C}_{\Re(z)>0}$, it follows

\[
\sum_{j=1}^\infty ||\phi_j(s,z) - z||_{\mathcal{K}} < \infty
\]

Which should give us hope that $\phi_j (s, z)$ can be infinitely composed for each $s$ fixed since the sum converges geometrically and our summability criterion is met. It looks familiar. For the second pitch, we get a knuckle ball that goes which way and that. Sadly, $\phi_j (s, \cdot) : \mathbb{C}_{\Re(z)>0} \to \mathbb{C}_{\Re(z)>0}$. Or to literate: for $s$ fixed $\phi_j (s, \cdot)$ does not take the right half plane to itself. Theorem \ref{thm3A} gives us nothing then. To reiterate: in Theorem \ref{thm3A} we proved that for a sequence of functions $\{ \phi_j \}_{j=1}^\infty$ which take $\mathcal{G} \to \mathcal{G}$ and which satisfy the summability criterion of Condition \ref{eq:3A} then the expression $\OmSum_{j=1}^\infty \phi_j$ was holomorphic. Here we have a summability criterion but our sequence of functions do not send $\mathbb{C}_{\Re(z)>0} \to \mathbb{C}_{\Re(z)>0}$. There is no ideal $\mathcal{G}$ like there was in Theorem \ref{thm3A}. We are going to have to start from scratch all over again.

So, how to salvage this? The general idea is that $\phi_j (s, \cdot) : \mathbb{C} \to \mathbb{C}$ so that $\phi_1(s, \phi_2(s, ...\phi_n(s, z)))$ always exists and is a meaningful thing, but the summability criterion only holds on $\mathbb{C}_{\Re(z)>0}$. This will be enough for $\OmSum_{j=1}^n \phi_j (s, z) \bullet z = \phi_1(s, \phi_2(s, ...\phi_n(s, z)))$ to converge as $n \to \infty$ on $\mathbb{C}_{\Re(z)>0}$, however it will not live in $\mathbb{C}_{\Re(z)>0}$; it will live in $\mathbb{C}$.

But how do we let $s$ enter the picture?\\

\emph{[Enter the third summability criterion.]}\\

What is dire and important to the proceeding is the following fact. There is a cover $\{\mathcal{S}_l\}_{l=1}^\infty$ of $\mathbb{C}$ wherein for each $\mathcal{S} \in \{\mathcal{S}_l\}_{l=1}^\infty$ and each compact disk $\mathcal{K} \subset \mathbb{C}_{\Re(z)>0}$:

\begin{equation}
\sum_{j=1}^\infty ||\phi_j(s,z) - z||_{z \in \mathcal{K},s \in \mathcal{S}} < \infty
\label{eq:4D}
\end{equation}

\begin{remark} 
We choose the language of covers as it affords us an easier transition when talking of the general case--and eases the proofs of some later results on $\U$. Frankly, for the rest of this section we could simply take $\{\mathcal{S}_l\}_{l=1}^\infty$ to be the sequence of compact sets $\mathcal{S}_l = \{s \in \mathbb{C} \,|\, |s| \le l\}$, and we could abandon this language altogether and just talk about uniform convergence on compact subsets of $\mathbb{C}$. However we would like to let $\infty$ enter the picture. This is to say: our summability criterion is met if for instance one of the $\mathcal{S}$ happened to equal $\{s \in \mathbb{C}\, |\, \Re(s) < 0, |\Im(s)|< 1\}$. Allowing covers of $\mathbb{C}$ will help us talk about asymptotics of $\U(s)$ in Section \ref{sec6}.
\end{remark}

This is the condition we've been looking for. It gives way broad generalizations and soars us far from where we started in the last two sections. We are going to imitate everything we just did in the last two sections--but here is where we take score.

We are going to start with a lemma, that lemma being the parallel of Lemma \ref{lma3A}. This lemma is important for all the previous reasons Lemma \ref{lma3A} was important in Section \ref{sec3}, except we have an extra variable dangling in there. The reader should freely see that the proof goes along exactly as the proof of Lemma \ref{lma3A} went. We are quite literally going to do exactly the same thing as the last two sections excepting we're going to have to consider a variable $s$. We will also have to be more careful when talking about domains. But the heart of the proofs will be no different than Theorem \ref{thm2A}, Lemma \ref{lma3A}, Theorem \ref{lma3A}.

We're going to lose generality in the coming lemma but the reader may freely see how to apply the reasoning to more general circumstances. The author will restate this lemma in the following section. There he will put it in a general light, but by that point we'll leave the proof unsaid, as the reader should be able to fill in the -----.

So without any more time outs and due process, let us begin:

\begin{lemma}\label{lma4A}
Let $\phi_j (s, z) = z + e^{(s-j)z}$. Suppose $\mathcal{S}$ is a subset of $\mathbb{C}$ such for all compact disks $\mathcal{K}$ within $\mathbb{C}_{\Re(z)>0}$

\[
\sum_{j=1}^\infty ||\phi_j(s,z) - z||_{\mathcal{K},\mathcal{S}} < \infty
\]

For all $\rho > 0$ there is an $N$ such when $m \ge n > N$

\[
||\OmSum_{j=n}^m\phi_j(s,z) \bullet z - z||_{\mathcal{K},\mathcal{S}} < \rho
\]
\end{lemma}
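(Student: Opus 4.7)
The plan is to imitate the proof of Lemma \ref{lma3A} essentially verbatim, treating $s \in \mathcal{S}$ as an inert parameter. The crucial observation is that the summability criterion hypothesized here already takes the supremum over $z \in \mathcal{K}$ and $s \in \mathcal{S}$ \emph{simultaneously}, so setting $\rho_j = ||\phi_j(s,z) - z||_{\mathcal{K},\mathcal{S}}$ we may choose $N$ such that the tail $\sum_{j=N+1}^\infty \rho_j < \rho$ is uniformly controlled across the whole parameter set. The presence of $s$ is thus a bookkeeping matter, not an analytical one.

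First I would fix $z_0 \in \mathbb{C}_{\Re(z)>0}$, pick a compact disk $\mathcal{K}$ of radius $P$ about $z_0$ lying inside $\mathbb{C}_{\Re(z)>0}$, and choose $\delta > 0$ small enough that $\delta + \rho < P$ so that any point within distance $\rho$ of a point in the smaller disk $|z - z_0| < \delta$ still lies in $\mathcal{K}$. Writing $F_{nm}(s,z) = \OmSum_{j=n}^m \phi_j(s,z) \bullet z$, I would proceed by strong induction on $k = m - n$ to establish uniformly in $s \in \mathcal{S}$ the two assertions: (i) $|F_{nm}(s,z) - z| < \rho$ for all $|z - z_0| < \delta$, which then implies (ii) $F_{nm}(s,z) \in \mathcal{K}$ for all such $z$ via the triangle inequality $|F_{nm}(s,z) - z_0| \le |F_{nm}(s,z) - z| + |z - z_0| < \rho + \delta < P$. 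The base case $k = 0$ is immediate from $|F_{nn}(s,z) - z| = |\phi_n(s,z) - z| \le \rho_n \le \sum_{j=N+1}^\infty \rho_j < \rho$.

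For the inductive step I would telescope using the functional identity $\phi_n(s, F_{(n+1)m}(s,z)) = F_{nm}(s,z)$:
\[
|F_{n(n+k)}(s,z) - z| \le \sum_{j=0}^{k-1} |\phi_{n+j}(s, F_{(n+j+1)(n+k)}(s,z)) - F_{(n+j+1)(n+k)}(s,z)| + |\phi_{n+k}(s,z) - z|.
\]
The inductive hypothesis, applied to the smaller differences $(n+k) - (n+j+1) = k - j - 1 < k$, guarantees each $F_{(n+j+1)(n+k)}(s,z)$ lies in $\mathcal{K}$, so every summand is bounded by $\rho_{n+j}$, and the total is at most $\sum_{j=N+1}^\infty \rho_j < \rho$. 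Since any compact disk $\mathcal{K} \subset \mathbb{C}_{\Re(z)>0}$ can be covered by finitely many disks of radius $\delta$ with centers in $\mathcal{K}$, a common $N'$ can be chosen that works on all of $\mathcal{K}$ and all of $\mathcal{S}$.

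The main obstacle is precisely the one already confronted in Lemma \ref{lma3A}: since $\phi_j(s, \cdot)$ does \emph{not} map $\mathcal{K}$ (or even $\mathbb{C}_{\Re(z)>0}$) into itself, one cannot \`a priori bound $|\phi_{n+j}(s, F_{(n+j+1)(n+k)}) - F_{(n+j+1)(n+k)}|$ by $\rho_{n+j}$; that bound is legal only after we have verified the inner nested composition still resides in $\mathcal{K}$, which is exactly the content the strong induction is designed to deliver. The additional parameter $s \in \mathcal{S}$ introduces no new analytic obstruction because the joint supremum norm $||\cdot||_{\mathcal{K},\mathcal{S}}$ absorbs it at every step.
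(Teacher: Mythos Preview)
Your proposal is correct and mirrors the paper's own proof almost step for step: the same choice of $\rho_j = ||\phi_j(s,z)-z||_{\mathcal{K},\mathcal{S}}$, the same strong induction on $k=m-n$, the same telescoping via $\phi_n(s,F_{(n+1)m})=F_{nm}$, and the same finite-cover finish. Your explicit remark that the joint supremum norm over $\mathcal{K}\times\mathcal{S}$ makes $s$ analytically inert is exactly the point the paper stresses, and your identification of the obstacle---that the inner compositions are not \emph{a priori} in $\mathcal{K}$ until the induction delivers it---is precisely what the paper calls ``the trick of this proof by induction.''
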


\begin{remark}
Again, the set $\mathcal{S}$ need not be compact. It need only be a set in which the summability criterion is met. For instance, if $\mathcal{S} \subset \mathbb{C}$ were the set defined by the relations $|\Im(s)| < 1$ and $\Re(s)< 0$, then the summability criterion is met and the results of the lemma still follow. Therein, this lemma applies for unbounded sets in $s$ so long as the sum in Condition \ref{eq:4D} is convergent for all compact disks $\mathcal{K} \subset \mathbb{C}_{\Re(z)>0}$. This will become very important in the coming sections when discussing the behaviour of $\U$ in $s$.
\end{remark}

\begin{proof}
Choose a $z_0 \in \mathbb{C}_{\Re(z)>0}$ and a compact disk $\mathcal{K} \subset \mathbb{C}_{\Re(z)>0}$ such that its elements satisfy $|z-z_0| \le P$ where $P > 0$. Choose $\delta > 0$ such that $|z-z_0| < \delta < P$ still resides within the interior of the set $\mathcal{K}$. Set $||\phi_j (s, z) - z||_{z\in\mathcal{K},s\in\mathcal{S}} = \rho_j$ and choose $N$ large enough so that

\[
\sum_{j=N+1}^\infty \rho_j < \rho
\]

where $|z -z_0| < \delta +\rho < P$ is still in the interior of $\mathcal{K}$. This is justified, because $N$ can be chosen large enough so that $\rho$ can be made as small as possible. Denote: $F_{nm}(s, z) =\OmSum_{j=n}^m \phi_j (s, z) \bullet z = \phi_n(s, \phi_{n+1}(s, ...\phi_m(s, z)))$. The goal is to show when $m \ge n > N$, $||F_{nm}(s, z) - z||_{|z-z_0|\le\delta,s \in \mathcal{S}} < \rho$ and consequently by the triangle inequality 

\begin{eqnarray*}
||F_{nm}(s, z) - z_0||_{|z-z_0|\le\delta,s \in \mathcal{S}} &<& |z - z_0| + ||F_{nm}(s, z) - z||_{|z-z_0|\le\delta,s \in \mathcal{S}}\\
&<& \delta + \rho < P
\end{eqnarray*}

so that $F_{nm}(s, z) \in \mathcal{K}$. The natural path is to take the exact same path we took in Lemma \ref{lma3A}. We perform strong induction on the difference $m - n = k$. When $k = 0$ then $F_{nm} = F_{nn} = \phi_n(s, z)$ and

\begin{eqnarray*}
||F_{nn}(s, z) - z||_{|z-z_0|\le\delta,s \in \mathcal{S}} &\le& ||\phi_n(s, z) - z||_{\mathcal{K},\mathcal{S}}\\
&=& \rho_n \le \sum_{j=N+1}^\infty \rho_j < \rho\\
\end{eqnarray*}

By the triangle inequality this tells us $|F_{nn}(s, z)-z_0| < \rho+\delta < P$, which implies $F_{nn}(s, z) \in \mathcal{K}$ for $s \in \mathcal{S}$ and $|z -z_0| \le \delta$. Assume the induction hypothesis: for all $m - n < k$ it follows $||F_{nm} - z||_{|z-z_0|\le\delta,s \in \mathcal{S}} < \rho$ and consequently (by the triangle inequality) $||F_{nm} -z_0||_{|z-z_0|\le\delta,s \in \mathcal{S}} < \delta+\rho < P$ so $F_{nm} \in \mathcal{K}$. We now show this implies the result when $m-n = k$. To save space, and maintain clarity, in the following all the supremum norms $||...||$ are taken across $|z - z_0| \le \delta$ and $s \in \mathcal{S}$. Again, recall that $\phi_n(s, F_{(n+1)m}) = F_{nm}$, we are going to abuse this identity all over again.

\begin{eqnarray*}
||F_{n(n+k)} - z|| &\le& \sum_{j=0}^{k-1} ||F_{(n+j)(n+k)} - F_{(n+j+1)(n+k)}|| + ||F_{(n+k)(n+k)} - z||\\
&=& \sum_{j=0}^{k-1} ||\phi_{n+j}(s, F_{(n+j+1)(n+k)}) - F_{(n+j+1)(n+k)}|| + ||\phi_{n+k} - z||\\
&\le& \sum_{j=0}^{k} \rho_{n+j}\\
&\le& \sum_{j=N+1}^\infty \rho_j < \rho\\
\end{eqnarray*}

Where we've bounded $||\phi_{n+j} (s, F_{(n+j+1)(n+k)}) - F_{(n+j+1)(n+k)}||_{|z-z_0|\le\delta,\mathcal{S}}$ by $||\phi_{n+j}(s, z)- z||_{z \in \mathcal{K},s \in \mathcal{S}} = \rho_{n+j}$ which is justified since $F_{(n+j+1)(n+k)} \in \mathcal{K}$ per the induction hypothesis since $(n + k) - (n + j + 1) = k - j - 1 < k$. Therefore $||F_{n(n+k)} - z|| < \rho$ and $F_{n(n+k)}(s, z) \in \mathcal{K}$ for all $|z - z_0| \le \delta$ and $s \in \mathcal{S}$. By strong induction, this tells us that $|F_{nm}(s, z) - z| < \rho$ for all $m \ge n > N$ while $s \in \mathcal{S}$ and $|z - z_0| \le \delta$. 

About an arbitrary point $z_0 \in \mathbb{C}_{\Re(z)>0}$ there is a compact neighborhood where $|F_{nm}(s, z) - z| < \rho$ for all $s \in \mathcal{S}$. Since $\mathcal{K}$ is a compact disk within $\mathbb{C}_{\Re(z)>0}$, and each compact disk can be covered by a finite number of disks within $\mathbb{C}_{\Re(z)>0}$, we can safely state there exists $N'$ large enough such when $m \ge n > N'$

\[
||\OmSum_{j=n}^m \phi_j(s,z)\bullet z - z||_{\mathcal{K},\mathcal{S}} < \rho
\]

\[
||\phi_n(s,\phi_{n+1}(s,...\phi_m(s,z))) - z||_{\mathcal{K},\mathcal{S}} < \rho
\]
\end{proof}

Now towards our actual function we approach. We are prepared to prove the heart of this paper. Namely that the function

\[
\U(s) = \OmSum_{j=1}^\infty z + e^{(s-j)z} \bullet z
\]

is entire in $s$ and satisfies the functional equation $\U(s + 1) - \U(s) = e^{s\U(s)}$. We have layered our reasoning so far, so this Theorem should be easier to spot by this point. It is essentially the previous two Theorems of the previous two sections, excepting we have an extra variable dangling in there to cause trouble. If this paper were the production of a TV show, this is the series premiere.

\begin{theorem}\label{thm4A}
The function $\U = \OmSum_{j=1}^\infty z + e^{(s-j)z} \bullet z$ is an entire function in $s$ satisfying the functional equation

\[
\Delta \U = \U(s + 1) - \U(s) = e^{s\U(s)}
\]

It is also holomorphic in $z$ for $\Re(z) > 0$.
\end{theorem}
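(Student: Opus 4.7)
The plan is to mirror the structure of Theorems \ref{thm2A} and \ref{thm3A}, with Lemma \ref{lma4A} playing the role Lemma \ref{lma3A} did in Section \ref{sec3}, and to handle the extra parameter $s$ by working on products of the form $\mathcal{K} \times \mathcal{S}$, where $\mathcal{K} \subset \mathbb{C}_{\Re(z)>0}$ is a compact disk and $\mathcal{S}$ is a member of a cover of $\mathbb{C}$, for concreteness $\mathcal{S}_l = \{s \in \mathbb{C} \, : \, |s| \le l\}$. The first task is to verify Condition \ref{eq:4D} on each such product. Writing $s = \sigma + i\tau$ and $z = x + iy$, on $\mathcal{K}$ we have $x \ge c > 0$ and $|y| \le C$, while on $\mathcal{S}_l$ both $|\sigma|, |\tau| \le l$, so
\[
|\phi_j(s,z) - z| \;=\; |e^{(s-j)z}| \;=\; e^{(\sigma - j)x - \tau y} \;\le\; e^{lC}\, e^{-(j-l)c}
\]
for $j > l$; summing this geometric tail gives the required summability.

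With the summability in hand, Lemma \ref{lma4A} supplies $N$ such that the tail $F_{Nm}(s,z) = \OmSum_{j=N}^m \phi_j(s,z) \bullet z$ lies in a prescribed $\rho$-neighborhood of $z$, and in particular is uniformly bounded by some constant $A$ on $|z-z_0|\le 2\delta$, $s \in \mathcal{S}$ (where $\rho, \delta > 0$ are chosen with $|z-z_0| \le 2\delta + \rho < P$ still inside $\mathcal{K}$). Cauchy's integral formula in the $z$-variable on $|z-z_0| = 2\delta$ then yields $||F_{Nm}^{(k)}(s,z)||_{|z-z_0|\le \delta, \mathcal{S}} \le 2Ak!/\delta^k$. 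Expanding $F_{N(m+1)}(s,z) = F_{Nm}(s, \phi_{m+1}(s,z))$ via Taylor's theorem in $z$ about $F_{Nm}(s,z)$, pulling one factor of $\phi_{m+1}(s,z) - z$ out of the series and summing the remaining geometric tail in $\rho/\delta$ (after further increasing $N$ so that $||\phi_{m+1}(s,z) - z||_{\mathcal{K},\mathcal{S}} < \rho < \delta$), we arrive at
\[
||F_{N(m+1)} - F_{Nm}||_{|z-z_0|\le \delta, \mathcal{S}} \;\le\; M \, ||\phi_{m+1}(s,z) - z||_{\mathcal{K},\mathcal{S}}
\]
with $M = 2A/(\delta(1 - \rho/\delta))$, exactly as in Theorem \ref{thm3A}.

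Summability of $\rho_j = ||\phi_j(s,z) - z||_{\mathcal{K},\mathcal{S}}$ then makes the sequence $F_{Nm}$ Cauchy in the sup-norm on $|z-z_0|\le\delta$, $s\in\mathcal{S}$, so it has a holomorphic (in both variables) limit. Precomposing with the finitely many entire functions $\phi_1,\dots,\phi_{N-1}$---which are uniformly continuous on the compact image of the tail---upgrades this to uniform convergence of $\U_m(s,z)$ on a neighborhood of $(z_0,\mathcal{S})$. Since $z_0 \in \mathbb{C}_{\Re(z)>0}$ and the $\mathcal{S}_l$ are arbitrary, Weierstrass' theorem on local uniform limits of holomorphic functions delivers $\U(s,z)$ holomorphic in $z$ on the right half-plane and entire in $s$. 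The functional equation then follows by passing to the limit $n \to \infty$ in the identity $f(s, \U_n(s,z)) = \U_{n+1}(s+1, z)$ already recorded in the preamble to the theorem, using the joint continuity of $f$ and of each $\U_n$.

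The main obstacle is that $\phi_j(s,\cdot)$ does \emph{not} preserve $\mathbb{C}_{\Re(z)>0}$, so the one-step invariance used so freely in Theorem \ref{thm2A} is unavailable; every a priori bound on the composite must be extracted from the tail behavior guaranteed by Lemma \ref{lma4A}, which is exactly why the argument begins at a large index $N$ and only re-injects the initial factors $\phi_1,\dots,\phi_{N-1}$ through uniform continuity at the end. The extra variable $s$ is, by comparison, docile: the Taylor expansion is performed only in $z$, $s$ sits passively as a parameter, and choosing $\mathcal{S}$ from the stated cover propagates the sup-norm estimates uniformly in $s$ with no additional work.
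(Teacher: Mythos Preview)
Your proposal is correct and follows essentially the same route as the paper's own proof: invoke Lemma \ref{lma4A} to trap the tail $F_{Nm}$ in a compact disk (supplying the uniform bound $A$), apply Cauchy's estimates in $z$ to control the derivatives, Taylor-expand $F_{N(m+1)} = F_{Nm}(s,\phi_{m+1}(s,z))$ and pull out one factor of $\phi_{m+1}-z$ to obtain the constant $M = 2A/(\delta(1-\rho/\delta))$, conclude that $F_{Nm}$ is Cauchy, and finally compose with the leading factors $\phi_1,\dots,\phi_{N-1}$ and pass to the limit in $\U_n(s)+e^{s\U_n(s)}=\U_{n+1}(s+1)$. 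The only addition is your explicit verification of the summability criterion on $\mathcal{K}\times\mathcal{S}_l$, which the paper asserts but does not write out.
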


\begin{proof}
Denote $\phi_j (s, z) = z + e^{(s-j)z}$. Choose a set $\mathcal{S} \subset \mathbb{C}$ in which

\[
\sum_{j=1}^\infty ||\phi_j(s,z) - z||_{\mathcal{K},\mathcal{S}} < \infty
\]

for all compact disks $\mathcal{K} \subset \mathbb{C}_{\Re(z)>0}$.\footnote{I'm repeating myself but encore: The set $\mathcal{S}$ need not be bounded, it solely need be a set in which the aforementioned condition is met. For example $\mathcal{S}$ could equal the set $\{s \in \mathbb{C}\, | \,\Re(s) <0,\, |\Im(s)| < 1\}$.}

Choose a point $z_0 \in \mathbb{C}_{\Re(z)>0}$ and pick a compact disk $\mathcal{K} \subset \mathbb{C}_{\Re(z)>0}$ centered at $z_0$ with radius $P$, so that $\mathcal{K} = \{z \in \mathbb{C} \,|\, |z - z_0| \le P\}$ and $\mathcal{K} \subset \mathbb{C}_{\Re(z)>0}$. Choose $\delta > 0$ such that $|z - z_0| < 2\delta < P$, so $z$ still resides in the interior of $\mathcal{K}$. Starting with Lemma \ref{lma4A}, using the same notation $F_{nm} = \OmSum_{j=n}^m \phi_j (s, z) \bullet z = \phi_n(s, \phi_{n+1}(s, ...\phi_m(s, z)))$, we know that $N$ can be chosen large enough so that $||F_{nm}(s, z) - z||_{|z-z_0|\le 2\delta, \mathcal{S}} < \rho$. Choose $N$ large enough such that $2\delta + \rho < P$ so that $F_{nm}(s, z) \in \mathcal{K}$ since $|F_{nm}(s, z)-z_0| < |z -z_0|+|F_{nm}(s, z)-z| < 2\delta +\rho < P$, while $m \ge n \ge N$. The set $\mathcal{K}$ is bounded by $P + |z_0| = A$ and thusly,

\[
||F_{nm}(s, z)||_{|z-z_0|\le2\delta,s \in \mathcal{S}} \le A
\]

Taking the derivatives of $F_{nm}(s, z)$, namely $\frac{d^k}{dz^k}F_{nm}(s,z) = F^{(k)}_{nm}(s,z)$; we can
write something that should be familiar, \`{a} la Cauchy:

\[
F^{(k)}_{nm}(s,z) = \frac{k!}{2\pi i} \int_{|z-z_0| = 2\delta} \frac{F_{nm}(s,\xi)}{(\xi - z)^{k+1}}\,d\xi
\]

As we did in Theorem \ref{thm3A} and Theorem \ref{thm2A}, we're going to shrink the domain of interest from $|z - z_0| \le 2\delta$ to $|z - z_0| \le \delta$, and arrive at the bounds

\[
||F^{(k)}_{nm}(s,z)||_{|z-z_0|\le \delta, \mathcal{S}} \le 2A\frac{k!}{\delta^k}
\]

which follow from the integral representation of $F^{(k)}_{nm}(s,z)$. The reader should freely see the routine of this. Again, $N$ should now be chosen large enough so that $\rho < \delta$. Let us expand $F_{N(m+1)}(s, z) = F_{Nm}(s, \phi_{m+1}(s, z))$ about $F_{Nm}$'s Taylor series about $z$:

\begin{eqnarray*}
F_{N(m+1)}(s, z) - F_{Nm}(s, z) &=& F_{Nm}(s, \phi_{m+1}(s, z)) - F_{Nm}(s, z)\\
&=& \sum_{k=1}^\infty F^{(k)}_{nm}(s,z) \frac{(\phi_{m+1}(s,z) - z)^{k}}{k!}\\
\end{eqnarray*}

This series converges for at least $|z - z_0| \le \delta$ and $s \in \mathcal{S}$ by the bounds $||F^{(k)}_{Nm}(s, z)|| < 2A\frac{k!}{\delta^k}$ and $||\phi_{m+1}(s, z) - z|| < \rho < \delta$. These should be observed as the same inequalities as before. So when we apply the supremum norm we get something which should be familiar by this point in the paper. It's exactly what we've seen before, except there is an extra variable. To save space, each supremum norm in the following is across $s \in \mathcal{S}$ and $z \in \mathcal{K}$ unless stated otherwise.

\begin{eqnarray*}
||F_{N(m+1)} - F_{Nm}||_{|z-z_0|\le\delta} &\le& \sum_{k=1}^\infty ||F^{(k)}_{Nm}||_{|z-z_0|\le\delta} \frac{||\phi_{m+1}(s,z)- z||^k_{|z-z_0|\le\delta}}{k!}\\
&=& \sum_{k=1}^\infty  2 A \frac{k!}{\delta^k}\frac{||\phi_{m+1}(s,z)- z||^k}{k!}\\
&=&2 A ||\phi_{m+1}(s,z)- z|| \sum_{k=1}^\infty   \frac{\rho^{k-1}}{\delta^k}\\
&=& \frac{2A}{\delta(1-\frac{\rho}{\delta})} ||\phi_{m+1}(s,z)- z||\\
&=& M||\phi_{m+1}(s,z)- z||\\
\end{eqnarray*}

Where we let $M = \frac{2A}{\delta(1-\frac{\rho}{\delta})}$. A few of the above steps were kept brief as the procedure is the same as it was in Theorem \ref{thm3A} and Theorem \ref{thm2A}. The sequence $F_{Nm}(s, z)$ can be now seen to be Cauchy in $m$ using the supremum norm across $|z - z_0| \le \delta$ and $s \in \mathcal{S}$. It is given then that $\lim_{m\to\infty} F_{Nm}(s, z) = F_N (s, z)$ is a holomorphic function in $s$ and $z$. Letting $\U(s, z) = \phi_1(s,...\phi_{N-1}(s, F_N (s, z))))$ then $\U$ is a holomorphic function for $s \in \mathcal{S}$ and $|z - z_0| < \delta$. Noting $\mathcal{S}$ was a subset of $\mathbb{C}$ in which

\[
\sum_{j=1}^\infty ||\phi_j(s,z) - z||_{\mathcal{K},\mathcal{S}} < \infty
\]

for any $\mathcal{K}$, and such sets cover $\mathbb{C}$ (e.g. For instance take the cover to be all compact subsets of $\mathbb{C}$); and $z_0$ was an arbitrary point in $\mathbb{C}_{\Re(z)>0}$; it follows $\U(s, z)$ is an entire function in $s$ and holomorphic for $\Re(z) > 0$. All that's left to show is that $\U$ satisfies the desired functional equation. Taking
the sequence of functions

\[
\U_n(s) = \OmSum_{j=1}^n z + e^{(s-j)z}\bullet z = \phi_1(s, \phi_2(s, ...\phi_n(s, z)))
\]

A simple plug and play, as we've tried to detail before, tells us:

\[
\U_n(s) + e^{s\U_n(s)} = \U_{n+1}(s + 1)
\]

so that 

\[
\U(s + 1) = \lim_{n\to\infty} \U_{n+1}(s + 1) = \lim_{n\to\infty} \U_n(s) + e^{s\U_n(s)} = \U(s) +e^{s\U(s)} 
\]

which concludes the proof.
\end{proof}

And voil\`{a}, the function is there. If $f(s, z) = z + e^{sz}$ there is a respective `Gamma' function $\G = \U(s, z)$ holomorphic for $s \in \mathbb{C}$ and $z \in \mathbb{C}_{\Re(z)>0}$. Noting $\U$ depends on both $s$ and $z$, it is perfectly fine to fix $z$ which will give us an entire function in one variable. This will give us a solution $y(s)$ to $\Delta y = e^{sy}$ which is an entire function in $s$.

The value of $z$ serves as a parameter which tells us our initial conditions. Controlling $z$ such that $\U(0) = \xi$ for $\xi$ an arbitrary number is a difficult task though. All the author will say in this paper is that $\U(0)$ depends on $z$. How it
depends on $z$ is left for another day.

With this being shown we can see our equivalent formulation of the Picard-Lindel\"{o}f problem is solved. When $\mathcal{T} y = y(s + 1)$ and $f(s, z) = z + e^{sz}$ then a solution to the equation $\mathcal{T} y = f(s, y)$ is given by the function:

\[
\U(s) = \OmSum_{j=1}^\infty z + e^{(s-j)z} \bullet z
\]

To make it more suggestive, the solution can be written:

\[
\G_f = \OmSum_{j=1}^\infty f(s - j, z) \bullet z
\]\\

The author breathes here to diverge briefly on some results we won't see in detail until Section \ref{sec6}, but which give context to the interesting nature of $\U$. The function $\U(s, z) \to z$ as $\Re(s) \to -\infty$. This is demonstrated by the relation $\U(s - n) = \OmSum_{j=n+1}^\infty z + e^{(s-j)z}\bullet z$. And our intuition should tell us that $\OmSum_{j=n+1}^\infty \phi_j (s, z) \bullet z \to z$ as $n \to \infty$. Just how $\sum_{j=n+1}^\infty a_j \to 0$, or $\prod_{j=n+1}^\infty b_j \to 1$, as $n \to \infty$. This in fact follows from Lemma \ref{lma4A}. This informs an intuition that $e^{s\U(s)} = \U(s + 1) - \U(s) \to 0$ exponentially as $\Re(s) \to -\infty$, because $e^{s\U(s)}$ should look like $e^{sz}$ eventually. Which allows us a self referential identity we get for free. The author refers to this as the telescoping identity:

\begin{eqnarray*}
\sum_{j=1}^\infty e^{(s-j)\U(s-j)} &=& \sum_{j=1}^\infty \U(s - j + 1) - \U(s - j)\\
&=& \lim_{n\to\infty} \U(s) - \U(s - n)\\
&=& \U(s) - z\\
\end{eqnarray*}

This affords us the knowledge $\U(s) - z \to 0$ exponentially as $\Re(s) \to -\infty$. It's natural to wonder what something like the Mellin transform of $\U(-t) - z$ might look like. The author certainly does.

\section{A brief note on the general case}\label{sec5}
\setcounter{equation}{0}

In the above we solved for our troublesome equation $y(s + 1) = \mathcal{T} y = f(s, y)$ when $f(s, z) = z + e^{sz}$. This section is not meant to prove anything, just state generalizations to any $f(s, z)$, these results may or may not be apparent. Their proofs are done exactly as the proofs of Lemma \ref{lma4A} and Theorem \ref{thm4A} were done. As such they will be writ without proof.

Despite this, it would be ignorant of me to not give at least the intuition necessary to visualize the results. To commence, we will attempt to reframe the statement of Lemma \ref{lma4A}. In order to do this we must yet again give clear meaning to our infinite composition. In Lemma \ref{lma4A} we proved our sequence of functions $\phi_j (s, z) = z + e^{(s-j)z}$ when composed together, to form $\OmSum_{j=n}^m \phi_j (s, z)\bullet z$, get arbitrarily close to $z$, in a uniform fashion, as $n \ge m > N$ gets arbitrarily large. The trick which is inherent in reproving this, is that we don't always have nice conditions on the domain and range of $\phi_j (s, z)$. Before, $\phi_j (s, z) = z + e^{(s-j)z}$ was an entire function in both variables for all $j$, but the summability criterion was lost for a large portion of $z$ values.

By the summability criterion, again, the author means

\[
\sum_{j=1}^\infty ||\phi_j(s, z) - z||_{\mathcal{K},\mathcal{S}} < \infty
\]

on specified domains, where $s \in \mathcal{S}$ and $z \in \mathcal{K}$. In the general case the variable $s$ is easy to handle and nothing really changes from what we've already written. For the $z$ variable, it's a bit trickier and the previous case we handled in Section \ref{sec4} can shed light on this. When $\phi_j(s, z) = z + e^{(s-j)z}$ then $z$ can take values in the complex plane but the summability condition can only be satisfied for $\mathcal{K} \subset \mathbb{C}_{\Re(z)>0}$. Equally as troublesome $\phi_j (s, \cdot) : \mathbb{C}_{\Re(z)>0} \to \mathbb{C}_{\Re(z)>0}$. However, magically, in the final result we managed to prove $\OmSum_{j=1}^\infty \phi_j(s,z)$ is holomorphic for $z \in \mathbb{C}_{\Re(z)>0}$.

The boiled down result for any $\phi_j(s,z)$ is $\OmSum_{j=1}^\infty \phi_j(s,z)$ converges everywhere $\phi_j$ satisfies the summability criterion--so long as these expressions are meaningful. Our theorem should truly read: if $F_n = \OmSum_{j=1}^n \phi_j (s, z) \bullet z$ is a meaningful thing, and $\sum_{j=1}^\infty ||\phi_j (s, z) - z|| < \infty$, then $\lim_{n\to\infty} F_n = F$ is a holomorphic function. Sadly, a discussion of domains is necessary.

To elucidate the situation: let $\mathcal{B} \subseteq \mathcal{G} \subseteq \mathbb{C}$ be open sets. Suppose the sequence of holomorphic functions $\phi_j (s, z) : \mathcal{S} \times \mathcal{G} \to \mathcal{G}$ satisfy the summability criterion for all compact disks $\mathcal{K}$ within $\mathcal{B}$:

\[
\sum_{j=1}^\infty ||\phi_j(s,z) - z||_{\mathcal{K},\mathcal{S}} < \infty
\]

Then $||\OmSum_{j=n}^m \phi_j (s, z) \bullet z - z||_{\mathcal{K},\mathcal{S}}$ can be made arbitrarily small depending on how large we let $N$ get, where $m \ge n > N$. Looking forward, this tells us $\Phi(s, z) = \OmSum_{j=1}^\infty \phi_j (s, z) \bullet z$ is a holomorphic function for $s \in \mathcal{S}$ and $z \in \mathcal{B}$, where its range is $\mathcal{G}$. So that $\Phi(s, z) : \mathcal{S} \times \mathcal{B} \to \mathcal{G}$. In the last section $\phi_j (s, z) = z + e^{(s-j)z}$, and $\mathcal{S}$ could be an arbitrary subset of the complex plane (wherein the summability criterion was met), $\mathcal{B} = \mathbb{C}_{\Re(z)>0}$, and $\mathcal{G} = \mathbb{C}$--so that $\U(s, z) : \mathcal{S} \times \mathbb{C}_{\Re(z)>0} \to \mathbb{C}$. Where we could luckily cover $\mathbb{C}$ with a collection of sets $\{ \mathcal{S}_l \}_{l=1}^\infty$, so in the end it could be deduced $\U(s, z) : \mathbb{C} \times \mathbb{C}_{\Re(z)>0} \to \mathbb{C}$.

We can state the generalizations of both Lemma \ref{lma4A} and Theorem \ref{thm4A}. The proofs of which should be made appropriately by looking at the mentioned Lemma and Theorem.

\begin{lemma}
Let $\mathcal{S} \subset \mathbb{C}$ and $\mathcal{G} \subseteq \mathbb{C}$ where $\mathcal{G}$ is open. Suppose $\{\phi_j (s, z)\}_{j=1}^\infty$ is a sequence of holomorphic functions for $s \in \mathcal{S}$ and $z \in \mathcal{G}$. Suppose also, for $s$ fixed, $\phi_j (z) : \mathcal{G} \to \mathcal{G}$. If for all compact disks $\mathcal{K}$ where $\mathcal{K} \subset \mathcal{B} \subseteq \mathcal{G}$, where $\mathcal{B}$ is open:

\[
\sum_{j=1}^\infty ||\phi_j(s, z) - z||_{\mathcal{K},\mathcal{S}} < \infty
\]

then for all $\rho > 0$ there exists $N$ such when $m \ge n > N$:

\[
||\OmSum_{j=n}^m \phi_j (s, z) \bullet z - z||_{\mathcal{K},\mathcal{S}} < \rho
\]
\end{lemma}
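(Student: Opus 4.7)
The plan is to transplant the strong-induction argument of Lemma \ref{lma4A} wholesale, with the open set $\mathcal{B}$ playing the role that $\mathbb{C}_{\Re(z)>0}$ played in Section \ref{sec4}. The maps $\phi_j(s,\cdot):\mathcal{G}\to\mathcal{G}$ are the analogues of $\phi_j(s,\cdot):\mathbb{C}\to\mathbb{C}$ from the $\U$ construction, and the summability criterion only holds over compact disks inside $\mathcal{B}$, mirroring the way summability held only inside the right half-plane before. Nothing essential changes; only the symbols.

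First I would fix an arbitrary point $z_0 \in \mathcal{B}$ and a compact disk $\mathcal{K}_0 = \{z\in\mathbb{C}\,|\,|z-z_0|\le P\}\subset \mathcal{B}$. Set $\rho_j = ||\phi_j(s,z)-z||_{\mathcal{K}_0,\mathcal{S}}$; this sequence is summable by hypothesis. Given $\rho > 0$, choose $\delta > 0$ and $N$ so that $\delta + \rho < P$ and $\sum_{j=N+1}^\infty \rho_j < \rho$. Writing $F_{nm}(s,z) = \OmSum_{j=n}^m \phi_j(s,z)\bullet z$, the target becomes the conjoined pair of statements: for all $m \ge n > N$ and all $|z-z_0|\le \delta$, uniformly in $s\in\mathcal{S}$, we have $|F_{nm}(s,z)-z| < \rho$ and consequently, by the triangle inequality, $F_{nm}(s,z) \in \mathcal{K}_0$.

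These are shown simultaneously by strong induction on $k = m - n$. The base case $k=0$ reduces to $|F_{nn}-z| = |\phi_n(s,z)-z| \le \rho_n < \rho$. For the inductive step I would telescope using $\phi_n(s, F_{(n+1)m}) = F_{nm}$, obtaining
\[
F_{n(n+k)} - z = \sum_{j=0}^{k-1}\bigl(\phi_{n+j}(s, F_{(n+j+1)(n+k)}) - F_{(n+j+1)(n+k)}\bigr) + (\phi_{n+k}(s,z)-z),
\]
and then replacing each inner difference $|\phi_{n+j}(s, F_{(n+j+1)(n+k)}) - F_{(n+j+1)(n+k)}|$ by $\rho_{n+j}$. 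This replacement is precisely where the main difficulty sits: although $\phi_{n+j}(s,\cdot)$ is holomorphic on all of $\mathcal{G}$, the $\mathcal{K}_0$-sup bound $|\phi_{n+j}(s,w)-w|\le \rho_{n+j}$ is only legitimate when the argument $w=F_{(n+j+1)(n+k)}$ actually lies in $\mathcal{K}_0 \subset \mathcal{B}$, and $\mathcal{B}$ may be a proper subset of $\mathcal{G}$. The induction hypothesis supplies exactly that fact, since $(n+k)-(n+j+1) = k-j-1 < k$. Summing telescopically dominates everything by $\sum_{j=N+1}^\infty \rho_j < \rho$, closing the induction.

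The conclusion over an arbitrary compact disk $\mathcal{K} \subset \mathcal{B}$ (not just one centered at the chosen $z_0$) then follows by the usual covering argument: each point of $\mathcal{K}$ admits a small $\delta$-disk on which the above estimate holds with a locally chosen $N$, compactness reduces us to finitely many of these, and a single enlarged $N'$ handles them all simultaneously, yielding the claimed uniform bound in $s\in\mathcal{S}$ and $z\in\mathcal{K}$. The heart of the matter, as in Lemma \ref{lma4A}, is conceptual rather than computational: the inner composites must be certified to stay inside $\mathcal{K}_0$ before the $\mathcal{K}_0$-sup bound on $\phi_j(s,z) - z$ can be applied to them, and the strong induction on $k$ is the device that peels back these layers one at a time.
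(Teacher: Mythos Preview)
Your proposal is correct and is precisely the argument the paper intends: the paper states this lemma without proof, remarking that it is obtained by rerunning the strong-induction argument of Lemma~\ref{lma4A} (itself modeled on Lemma~\ref{lma3A}) with $\mathcal{B}$ in place of $\mathbb{C}_{\Re(z)>0}$ and $\mathcal{G}$ in place of $\mathbb{C}$. You have identified the one nontrivial point---that the induction hypothesis is what certifies $F_{(n+j+1)(n+k)}\in\mathcal{K}_0$ so the $\mathcal{K}_0$-sup bound on $\phi_{n+j}(s,z)-z$ may be applied---and handled the passage to a general compact disk by the same finite-cover step the paper uses.
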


Getting from this Lemma to the following Theorem follows as the above three theorems in the previous three sections have gone.

\begin{theorem} 
Let $\mathcal{S} \subset \mathbb{C}$ and $\mathcal{G} \subseteq \mathbb{C}$ where $\mathcal{G}$ is open. Suppose $\{\phi_j (s, z)\}_{j=1}^\infty$ is a sequence of holomorphic functions for $s \in \mathcal{S}$ and $z \in \mathcal{G}$. Suppose also, for $s$ fixed, $\phi_j (z) : \mathcal{G} \to \mathcal{G}$. If for all compact disks $\mathcal{K}$ where $\mathcal{K} \subset \mathcal{B} \subseteq \mathcal{G}$, where $\mathcal{B}$ is open:

\[
\sum_{j=1}^\infty ||\phi_j(s, z) - z||_{\mathcal{K},\mathcal{S}} < \infty
\]

then the sequence of functions $\Phi_n(s, z) : \mathcal{S} \times \mathcal{B} \to \mathcal{G}$

\[
\Phi_n(s, z) = \OmSum_{j=1}^n \phi_j (s, z) \bullet z
\]

converge uniformly to $\Phi(s, z)$ for $s \in \mathcal{S}$ and on compact subsets of $\mathcal{B}$.
\end{theorem}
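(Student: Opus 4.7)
The plan is to mirror the proof of Theorem \ref{thm4A} almost verbatim, replacing the concrete setup $\phi_j(s,z) = z + e^{(s-j)z}$, $\mathcal{B} = \mathbb{C}_{\Re(z)>0}$, $\mathcal{G}=\mathbb{C}$ with the abstract triple $(\mathcal{B},\mathcal{G},\mathcal{S})$. Fix $z_0 \in \mathcal{B}$ and choose a compact disk $\mathcal{K} = \{|z-z_0| \le P\}$ with $\mathcal{K} \subset \mathcal{B}$. Pick $\delta > 0$ with $2\delta < P$, and write $F_{nm}(s,z) = \OmSum_{j=n}^m \phi_j(s,z) \bullet z$. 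First I would invoke the preceding lemma to find $N$ so large that when $m \ge n > N$ one has $\|F_{nm}(s,z) - z\|_{|z-z_0|\le 2\delta,\mathcal{S}} < \rho$ for a prescribed $\rho$; choosing $\rho$ with $2\delta + \rho < P$ and $\rho < \delta$ forces $F_{nm}(s,z) \in \mathcal{K}$, hence bounded by some $A = P + |z_0|$.

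Next I would run the Cauchy-integral/Taylor bootstrap that has appeared three times now. The contour integral representation over $|\xi - z_0| = 2\delta$ gives the derivative bound
\[
\|F^{(k)}_{nm}(s,z)\|_{|z-z_0|\le\delta,\mathcal{S}} \le 2A\frac{k!}{\delta^k},
\]
and Taylor-expanding $F_{N(m+1)}(s,z) = F_{Nm}(s,\phi_{m+1}(s,z))$ about $z$ yields
\[
\|F_{N(m+1)} - F_{Nm}\|_{|z-z_0|\le\delta,\mathcal{S}} \;\le\; \frac{2A}{\delta(1-\rho/\delta)}\,\|\phi_{m+1}(s,z) - z\|_{\mathcal{K},\mathcal{S}},
\]
after pulling out one factor of $\phi_{m+1}(s,z)-z$ and summing the geometric series in $\rho/\delta$. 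Telescoping, the summability hypothesis then shows $\{F_{Nm}\}_m$ is Cauchy in the sup-norm over $|z-z_0|\le\delta$ and $s\in\mathcal{S}$, so it converges uniformly to some holomorphic limit $F_N(s,z)$.

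To recover $\Phi_n$ itself I would post-compose: write $\Phi_m(s,z) = \phi_1(s,\phi_2(s,\ldots\phi_{N-1}(s,F_{Nm}(s,z))\ldots))$. The outer composition is a fixed holomorphic function of its inner argument taking $\mathcal{G}\to\mathcal{G}$ (this is where the hypothesis $\phi_j(s,\cdot) : \mathcal{G}\to\mathcal{G}$ is essential — it guarantees the outer block is defined on the set $\mathcal{K}\subset\mathcal{G}$ where $F_{Nm}$ lives), and is uniformly continuous on the compact set $\mathcal{K}$ for each $s\in\mathcal{S}$. Consequently $\Phi_m$ inherits uniform convergence on $|z-z_0|\le\delta$, $s\in\mathcal{S}$. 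Covering an arbitrary compact subset of $\mathcal{B}$ by finitely many such $\delta$-disks finishes the theorem, and $\Phi(s,z) : \mathcal{S}\times\mathcal{B}\to\mathcal{G}$ is holomorphic in $z$ by Weierstrass.

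The main obstacle — the only place where genuine care is required beyond bookkeeping — is the gap between $\mathcal{B}$ and $\mathcal{G}$. The summability estimate is only valid on disks $\mathcal{K}\subset\mathcal{B}$, so one must verify that the tail iterates $F_{nm}(s,z)$ do not escape $\mathcal{B}$ (and in particular stay inside some compact disk there) while the outer block $\phi_1(s,\cdot)\circ\cdots\circ\phi_{N-1}(s,\cdot)$ is only known to be defined on the larger set $\mathcal{G}$. The lemma handles the former by quantifying how close the tail stays to the identity, and the assumption $\phi_j(s,\cdot):\mathcal{G}\to\mathcal{G}$ handles the latter since the outer block's argument automatically lies in $\mathcal{G}$. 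Once these two domain issues are reconciled, the proof is exactly that of Theorem \ref{thm4A}.
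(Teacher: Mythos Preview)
Your proposal is correct and is precisely the approach the paper intends: the paper states this theorem without proof, saying explicitly that it ``follows as the above three theorems in the previous three sections have gone,'' i.e., by rerunning the argument of Theorem~\ref{thm4A} with the abstract data $(\mathcal{B},\mathcal{G},\mathcal{S})$ in place of $(\mathbb{C}_{\Re(z)>0},\mathbb{C},\mathcal{S})$. You have also correctly identified the one genuine subtlety --- reconciling the domain where the summability criterion holds ($\mathcal{B}$) with the domain where the maps are self-maps ($\mathcal{G}$) --- and handled it exactly as the paper does, via the preceding lemma for the tail and the hypothesis $\phi_j(s,\cdot):\mathcal{G}\to\mathcal{G}$ for the outer block.
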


From these two theorems the general case of our initial problem is much easier to solve. The strange Picard-Lindel\"{o}f problem is patchwork solved. The discussion of domain and range must be made explicit in each case though. The essential theorem must be stated loosely and without rigor to be absorbed. If for all $\mathcal{K}$, a compact disk within $\mathcal{B} \subseteq \mathcal{G}$, and $f(s, \cdot) : \mathcal{G} \to \mathcal{G}$ where a summability criterion is met

\[
\sum_{j=1}^\infty ||f(s - j, z) - z||_{\mathcal{K},\mathcal{S}} < \infty
\]

then $y(s+ 1) = \mathcal{T} y = f(s, y(s))$ exists and is holomorphic for $s \in \mathcal{S}$ and can be constructed by $y(s, z) =\OmSum_{j=1}^\infty f(s-j, z)\bullet z$ for $z \in \mathcal{B}$ which serves as a parameter. This is the sussed out meaning of our solution of the modified Picard-Lindel\"{o}f problem.

When we make a change of operator and concern ourselves with $y(s+1)-y(s) = \Delta y = q(s, y)$, then our condition appears a bit cleaner; in it $q(s, z)$ must satisfy:

\[
\sum_{j=1}^\infty||q(s - j, z)||_{\mathcal{K},\mathcal{S}} < \infty
\]

Or rather, that the sum:

\[
\sum_{j=1}^\infty q(s - j, z)
\]

converges compactly normally and $\OmSum_{j=1}^n z + q(s-j,z) \bullet z$ is a meaningful thing. As soon as we let $q$ or $f$ be entire functions in $z$ then the problem is much easier to discuss. However, the author felt it necessary to frame the language to be as general as possible.

For those of you familiar with Ramanujan summation; this is a condition, if not the same, uniform with Ramanujan's condition. When $q(s, z) = q(s)$ and $q$ is constant in $z$, this entire paper boils into a very similar discussion. Where instead we are trying to solve for $\Delta y = q(s)$ and $y$ is referred to as the indefinite sum of $q$. The beauty of Ramanujan Summation is that Ramanujan managed to enlarge the domain in which this condition works. The author can patch work loosen these conditions, but he imagines if Ramanujan lived longer he would've put the author to shame.\\

In an attempt to satiate the reader, the author will give the solutions to $\G_f$ for the functions defined in Equations \ref{eq:1C} and \ref{eq:1D}. Sometimes dealing with actual functions makes the idea clearer. If

\[
f(s, z) = z + \frac{z^2}{s^2} + \frac{z^3}{z^3}
\]

then the appropriate Gamma function $\G_f = \mathcal{Q}$ which satisfies $f(s, \mathcal{Q}(s)) = \mathcal{Q}(s + 1)$ is given by the equation:

\[
\mathcal{Q} = \OmSum_{j=1}^\infty z + \frac{z^2}{(s-j)^2} + \frac{z^3}{(s-j)^3} \bullet z
\]

This $\mathcal{Q}(s, z)$ is holomorphic in $s$ for $s \in \mathbb{C}/\mathbb{N}$ and entire in $z$. It also satisfies the very weird equation:

\[
\mathcal{Q}(s+1) = \mathcal{Q} + \frac{\mathcal{Q}^2}{s^2} + \frac{\mathcal{Q}^3}{s^3}
\]

\[
\Delta \mathcal{Q} = \frac{\mathcal{Q}^2}{s^2} + \frac{\mathcal{Q}^3}{s^3}
\]

The $z$-value here, again serves as a parameter. It determines our initial condition of $\mathcal{Q}(s)$ at $\mathcal{Q}(0)$. Since $\mathcal{Q}$ is entire $z$ we can conclude there is possibly only one value $\zeta$ in which $\mathcal{Q}(0, z) \neq \zeta$. From this, for every other value $\xi \neq \zeta$ there is a $\mathcal{Q}(s)$ in which $\mathcal{Q}(0) = \xi$. Which is neat.

If our problem is a bit kookier and $f(s, z) = z\cos(2^sz) + \frac{\sin(z)}{s^2 +1} + \frac{\sin^2(z)}{(s^2+1)^2}$ then $\G_f = \mathcal{P}$ is given by:

\[
\mathcal{P} = \OmSum_{j=1}^\infty z\cos(2^{s-j}z) + \frac{\sin(z)}{(s-j)^2 +1} + \frac{\sin^2(z)}{((s-j)^2+1)^2}\bullet z
\]

It is also entire in $z$ and holomorphic for all $s \in \mathbb{C}$ where $s \neq j \pm i$ for $j \in \mathbb{N}$. It satisfies the excessive equation:

\[
\mathcal{P}(s+1)= \mathcal{P}\cos(2^s\mathcal{P}) + \frac{\sin(\mathcal{P})}{s^2 +1} + \frac{\sin^2(\mathcal{P})}{(s^2+1)^2}
\]

We leave a few exercises for the reader: Prove $\mathcal{Q}$ is a meromorphic function in $s$. Prove $\mathcal{P}$ is not a meromorphic function in $s$. Prove the order of the poles of $\mathcal{Q}$ at $s = j$ are $3^j$. Prove each singularity of $\mathcal{P}$ is an essential singularity excepting the second order poles at $\pm i$.

\section{Asymptotics of $\U$ on a half-plane}\label{sec6}
\setcounter{equation}{0}

The solutions of $y = \U(s)$ where $\Delta \U = e^{s \U(s)}$ are mysterious so far. Despite the nature of our solution being an infinite composition of functions, there is still a platform for working with them. Here is where we begin to carve away its shape at infinity. This is going to be tricky. One of the most magnificent results concerning the $\G$-function is its behaviour at infinity. Stirling's asymptotic formula is invaluable and a sheer work of art. It can be stated bluntly as:

\[
\G(s) \sim \sqrt{2\pi} s^{s+1/2}e^{-s}\, \text{as}\,|s| \to \infty \,\text{while}\, |\arg(s)| < \pi
\]

A beautiful proof of this result resides in Reinhold's \emph{Classical Topics In Complex Function Theory} \cite{ref14}, where there is a detailed discussion of the error term as well. The goal in this section is to do something similar with $\U$.

As an entire function, how does $\U$ behave as we increase $|s|$? I briefly mentioned above that as $\Re(s) \to -\infty$ our solution $\U(s)$ decays exponentially to the parameter $z$. I've said nothing yet about the behaviour as $\Im(s)$ approaches infinity. This section is here to set the aforementioned heuristics in stone, and elaborate by discussing $\U$'s behaviour as $\Im(s)$ grows.

As a brief digression, the author failed to find any substantial information on how $\U$ behaves as $\Re(s) \to \infty$ or as $|s| \to \infty$ when $|\arg(sz)| \le \pi/2$. The only thing he could wrap his head around is that it must grow fast, and very fast, somewhere in that domain. He rudely conjectures that $M(r) = \sup_{|s|\le r} |\U(s)|$ grows faster than any super-exponential. Meaning for all $a > 1$:

\[
\lim_{r\to\infty} \frac{a^{a^{...(n\,times)...^a}}}{M(r)} = 0
\]

Where $n = [r] = \text{greatest natural number less than or equal to r}$. He sadly failed to prove this fact for arbitrary $\Re(z) > 0$ despite numerous attempts. When $z \in \mathbb{R}^+$ the result is to say the least obvious; as $\U(x) > 0$ and $\U(x+1) > e^{x\U(x)}$.

What we aim to prove in this section is:

\[
\U(s) - z \sim \frac{e^{(s-1)z}}{1 - e^{-z}} \, \text{as}\, |s| \to \infty \, \text{while}\, \pi/2 < | \arg(sz)| \le \pi
\]

The essential statement, in English is: wherever $\frac{e^{(s-1)z}}{1 - e^{-z}}$ tends to zero, $\U(s) - z$ tends to zero also, in an asymptotic fashion. To prove this result we will progress by showing that $\U(s+ 1)-\U(s) \sim e^{sz}$ in this domain, and then show that $\U(s)-z \sim \frac{e^{(s-1)z}}{1 - e^{-z}}$ using the telescoping identity, briefly mentioned at the end of Section \ref{sec4}.

The telescoping identity is the simple manipulation of symbols:

\begin{eqnarray*}
\U(s) - z &=& \U(s) - \lim_{n\to\infty}\U(s - n)\\
&=& \lim_{n\to\infty} \sum_{j=1}^n \U(s - j + 1) - \U(s - j)\\
&=& \sum_{j=1}^\infty \U(s - j + 1) - \U(s - j)\\
&=& \sum_{j=1}^\infty e^{(s-j)\U(s-j)}\\
\end{eqnarray*}

A lemma is needed to complete this proof, this lemma acts more generally than we use it. However, we'll frame it in a localized setting and worry only about $\U(s)$. The take away from it is, if

\[
G(s, z) =\OmSum_{j=1}^\infty z + \ell_j (s, z) \bullet z
\]

and $\ell_j (s, z)$ tends to zero fast enough as $s \to A$ then $G(s, z) \to z$ as $s \to A$. It is a kind of `pulling the limit through the composition' type theorem. Wherein exchanging the infinite composition with the limit is made rigorous. Our phrasing will be chosen solely for the scenario of $\U(s)$ though.\footnote{This is where our summability criterion shines, and why we allowed unbounded domains when talking of it.}

\begin{lemma}\label{lma6A}
The function $\U(s) \to z$ as $|s| \to \infty$ when $\pi/2 < | \arg(sz)| \le \pi$.
\end{lemma}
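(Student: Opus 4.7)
The plan is to combine the exponential decay that the angular hypothesis forces upon $|e^{sz}|$ with the Cauchy-style telescoping estimate from the proof of Lemma~\ref{lma4A}, but applied to the entire composition rather than to a tail. The driving observation is that $\pi/2 < |\arg(sz)| \le \pi$ forces $\cos(\arg(sz)) < 0$, so on any closed sub-sector $\pi/2 + \epsilon \le |\arg(sz)| \le \pi$ one has $\Re(sz) = |s||z|\cos(\arg(sz)) \to -\infty$ linearly in $|s|$, at a rate bounded away from $0$ on the sub-sector.

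First, I would fix $z$ with $\Re(z) > 0$ and pick a compact disk $\mathcal{K}$ centered at $z$ of radius $r > 0$, where $r$ is chosen strictly less than both $\Re(z)$ (so $\mathcal{K} \subset \mathbb{C}_{\Re(z) > 0}$) and the quantity $-|z|\cos(\arg(sz))$, which is positive and bounded below uniformly on the sub-sector. Writing $w = z + \eta$ with $|\eta| \le r$, one has $\Re((s-j)w) \le \Re(sz) + r|s| - j(\Re(z) - r)$, hence
\[
\rho_j := \sup_{w \in \mathcal{K}} |\phi_j(s,w) - w| \le e^{\Re(sz) + r|s|}\, e^{-j(\Re(z) - r)}.
\]
Summing the geometric series, $\sum_{j=1}^\infty \rho_j \le C\, e^{\Re(sz) + r|s|}$ for a constant $C$ depending only on $z$ and $r$. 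Since $\Re(sz) + r|s| = -|s|\bigl(-|z|\cos(\arg(sz)) - r\bigr)$, this majorant tends to $0$ as $|s| \to \infty$ on the sub-sector.

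Second, I would replay the strong induction from the proof of Lemma~\ref{lma4A}, applied directly to the full composition $F_{1m}(s,z) = \OmSum_{j=1}^m \phi_j(s,z) \bullet z$ rather than to a tail $F_{nm}$ with $n$ large. For $|s|$ large enough that $\sum_{j=1}^\infty \rho_j < r$, the induction hypothesis certifies $F_{nm}(s,z) \in \mathcal{K}$ at every intermediate stage, and the telescoping identity $F_{nm} = \phi_n(s, F_{(n+1)m})$ combined with $|\phi_n(s, F_{(n+1)m}) - F_{(n+1)m}| \le \rho_n$ yields $|F_{1m}(s,z) - z| \le \sum_{j=1}^m \rho_j$ for every $m$. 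Letting $m \to \infty$,
\[
|\U(s,z) - z| \le \sum_{j=1}^\infty \rho_j \longrightarrow 0
\]
as $|s| \to \infty$ in the sub-sector, which gives the lemma.

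The main obstacle is bookkeeping: one must arrange the disk $\mathcal{K}$ so that the induction from Lemma~\ref{lma4A} runs with $n = 1$ rather than $n > N$, and this requires $\sum_j \rho_j$ to be small compared to the radius of $\mathcal{K}$. Because on any closed sub-sector $-\cos(\arg(sz))$ is bounded below by a positive constant, a single choice of radius $r$ works uniformly there and the decay $\sum_j \rho_j \to 0$ is uniform as well. Apart from this geometric verification, the proof is a transcription of Lemma~\ref{lma4A}, which is presumably why the author leaves it tacit in the text.
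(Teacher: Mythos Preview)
Your argument is correct, and in fact cleaner than the route the paper takes. The paper proceeds by a head--tail decomposition: first it shows by a separate induction that each finite composite $\U_n(s,w)\to z$ uniformly for $w$ in a neighbourhood of $z$ as $|s|\to\infty$ in the sector; then it invokes Lemma~\ref{lma4A} (with $\mathcal{S}$ equal to a closed sub-sector) to trap the tail $\OmSum_{j=N}^\infty$ in a small neighbourhood $\mathcal{N}$ of $z$; finally it glues, using $\U(s,z)=\U_{N-1}(s,\text{tail})$ and the uniform convergence of $\U_{N-1}$ on $\mathcal{N}$. Your approach bypasses the head--tail split entirely by observing that on a closed sub-sector the whole sum $\sum_{j\ge 1}\rho_j$, not just a tail, is small once $|s|$ is large, so the strong induction of Lemma~\ref{lma4A} already runs from $n=1$ and delivers the explicit bound $|\U(s,z)-z|\le C\,e^{-c|s|}$. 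This buys you the exponential decay of $\U(s)-z$ directly, which the paper only recovers later inside the proof of Theorem~\ref{thm6A}; conversely, the paper's structural argument makes clearer how the lemma would generalise to other $\phi_j$ where an explicit majorant like yours is unavailable.
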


\begin{proof}
The essential method of this proof is to consider

\[
\U_n(s, z) =\OmSum_{j=1}^n z + e^{(s-j)z} \bullet z
\]

And show that as $|s| \to \infty$ in the appropriate half-plane $\U_n(s) \to z$. Using induction, the function $\U_1(s, z) = z + e^{(s-1)z} \to z$ as $|s| \to \infty$ in the sector $\pi/2 < \kappa < | \arg(sz)| \le \pi$. The convergence is also uniform in neighborhoods of $z$.

Assume that the same follows for $\U_n(s, z)$. Well then, $\U_{n+1}(s, z) = \U_n(s, z +e^{(s-n-1)z})$ and the function $z + e^{(s-n-1)z} \to z$ uniformly in $z$ as $|s| \to \infty$ in the appropriate sector $\pi/2 < \kappa < \arg(sz) \le \pi$. Therefore $\U_{n+1}(s, z) \to z$ uniformly.

By Lemma \ref{lma4A}, and our summability criterion, for all $\rho > 0$ there exists $N$ such $|\OmSum_{j=n}^m z+e^{(s-j)z} \bullet z-z| < \rho$ for all $z \in \mathcal{K}$ while $\pi/2 < \kappa \le |\arg(sz)| \le \pi$ and $m \ge n > N$. Call $w \in \mathcal{N}$ the neighborhood $|w - z| < \rho$, then since $\U_{N-1}(s, z) \to z$ uniformly for $z \in \mathcal{N}$ as $|s| \to \infty$ while $\pi/2 < \kappa \le|\arg(sz)| \le \pi$ ; we must have $\U(s, z) \to z$. Since the tail of the composition remains bounded in $s$ and also approaches $z$ as we get further and further out, the beginning of the composition forces convergence to $z$ as we let $s$ increase in the half-plane $\pi/2 < | \arg(sz)| \le \pi$.
\end{proof}

Finally, some simple satiating asymptotics for $\U(s)$.

\begin{theorem}\label{thm6A}
The function $\U(s + 1) - \U(s) \sim e^{sz}$ as $|s| \to \infty$ when $\pi/2 <|\arg(sz)| \le \pi$. The function $\U(s) - z \sim \frac{e^{(s-1)z}}{1-e^{-z}}$.
\end{theorem}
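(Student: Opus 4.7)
Write $R(s) = \U(s) - z$. The two asymptotics are linked by the functional equation and the telescoping identity. From $\U(s+1)-\U(s) = e^{s\U(s)} = e^{sz}\,e^{sR(s)}$, the first asymptotic reduces to showing $sR(s)\to 0$ in the sector. From the telescoping identity,
\[
R(s) \;=\; \sum_{j=1}^\infty e^{(s-j)\U(s-j)} \;=\; \sum_{j=1}^\infty e^{(s-j)z}\,e^{(s-j)R(s-j)},
\]
and since $\tfrac{e^{(s-1)z}}{1-e^{-z}} = \sum_{j=1}^\infty e^{(s-j)z}$, the second asymptotic reduces to showing $e^{(s-j)R(s-j)}\to 1$ in a manner compatible with the summation.

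The plan is to bootstrap off Lemma~\ref{lma6A}. First, I would fix a sub-sector strictly interior to $\pi/2 < |\arg(sz)| \le \pi$, so that $\Re(sz) \le -c|s|$ for some $c > 0$. Choose $\epsilon > 0$ small (in particular $\epsilon < \min(c,\Re(z))$). Lemma~\ref{lma6A} provides $M$ such that $|R(w)| < \epsilon$ whenever $|w| > M$ and $w$ lies in the (shifted) sector. Noting that $s\mapsto s-j$ keeps us in the sector because $\Re(z)>0$ makes $\Re((s-j)z)$ more negative, substitute this bound into the telescoping identity, splitting the sum at $|s-j| = M$. On the finite head $|s-j|\le M$, the summands are bounded by a constant times $|e^{(s-j)z}|$; on the tail, $|e^{(s-j)R(s-j)}| \le e^{|s-j|\epsilon}$, and summing against the geometric factor $|e^{(s-j)z}| = e^{\Re(sz)-j\Re(z)}$ (which is summable in $j$ since $\epsilon < \Re(z)$) yields a total bound of the form $C\,e^{\Re(sz)+|s|\epsilon}$. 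By the choice of $\epsilon$, this decays like $e^{-(c-\epsilon)|s|}$. Hence $R(s) = O(e^{-c'|s|})$ on the sub-sector, which immediately gives $sR(s)\to 0$ and therefore $e^{sR(s)}\to 1$, proving the first asymptotic.

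With exponential decay of $R$ in hand, turn to the second asymptotic via
\[
R(s) - \frac{e^{(s-1)z}}{1-e^{-z}} \;=\; \sum_{j=1}^\infty e^{(s-j)z}\bigl[e^{(s-j)R(s-j)} - 1\bigr].
\]
Divide by $e^{(s-1)z}/(1-e^{-z})$ and apply the elementary inequality $|e^w-1|\le |w|e^{|w|}$ with $w=(s-j)R(s-j)$. Since Step~1 gives $|(s-j)R(s-j)| \le C|s-j|\,e^{-c'|s-j|}$, the bracket is $o(1)$ uniformly in $j$, and in fact decays fast enough in $j$ to provide a summable majorant independent of $s$. A standard head/tail split (finite head where the bracket goes to zero pointwise, and a tail controlled by the geometric factor $e^{-j\Re(z)}$) then lets the full sum tend to $0$, yielding $R(s) \sim \tfrac{e^{(s-1)z}}{1-e^{-z}}$.

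The main obstacle is the bootstrap itself: Lemma~\ref{lma6A} only gives pointwise smallness of $R$, whereas the asymptotic formulas require quantitative exponential decay that interacts with both variables $s$ and $j$ simultaneously in the telescoping sum. The crucial structural point making this work is that shifting $s\mapsto s-j$ stays in the sector (because $\Re(z)>0$), and that the exponential decay obtained in Step~1 is uniform enough across all $j$ to dominate the expansion $e^{(s-j)R(s-j)}-1$ in the second step. Handling the boundary of the sector (where $\Re(sz)\to 0$) requires restricting to strict sub-sectors, which is consistent with the meaning of the $\sim$ symbol in the statement.
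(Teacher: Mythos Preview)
Your proposal is correct and follows essentially the same route as the paper: use Lemma~\ref{lma6A} to get $\U(s)\to z$, bootstrap through the telescoping identity to upgrade this to exponential decay of $R(s)=\U(s)-z$, deduce $sR(s)\to 0$ (hence the first asymptotic), and then feed the exponential decay back into the telescoping sum to obtain the second asymptotic. The only cosmetic difference is in how the exponential bound on $e^{s\U(s)}$ is first extracted from Lemma~\ref{lma6A}: the paper argues that $\arg\U(s)$ stays close to $\arg z$, so $s\U(s)$ remains in the left half-plane and $|e^{s\U(s)}|\le|e^{\beta s}|$ for a suitable $\beta$, whereas you write $e^{s\U(s)}=e^{sz}e^{sR(s)}$ and bound the second factor by $e^{|s|\epsilon}$ directly from $|R|<\epsilon$; your version is arguably tidier and makes the dependence on $\epsilon$ and the sub-sector more explicit.
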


\begin{proof}
To show $\U(s + 1) - \U(s) \sim e^{sz}$ we need only show $s(\U(s) - z) \to 0$ due to the fact

\[
\frac{\U(s + 1) - \U(s)}{e^{sz}}= e^{s(\U(s)-z)}
\]

The essential trick and the crux of the proof relies on showing for $t \in \mathbb{R}^+$ and large, that $|\U(1 + \alpha t) - \U(\alpha t)| = |e^{\alpha t \U(\alpha t)}| \le Ce^{-t\rho}$ as $t$ gets arbitrarily close to $\infty$ for some arbitrary $\rho > 0$ and $|\alpha| = 1$ with $\pi/2 < \arg(\alpha z) \le \pi$. The heavy lifting has been handled in Lemma \ref{lma6A}. Fix $\kappa$ where $\pi/2 < \kappa < \pi$.

There exists $R$ large enough so that $-\epsilon < \arg(\U(s)) - \arg(z) < \epsilon$ and $\pi/2 + \epsilon < \kappa$. Choose $\alpha$ such that $|\arg(\alpha z)| > \kappa$. Then $\kappa < |\arg(\alpha z)| < |\arg(\alpha \U(\alpha t))| +|\arg z - \arg \U(\alpha t)|$ and we can intuit $|\arg(\alpha \U (t\alpha))| > \kappa - \epsilon > \pi/2$. Therefore $\alpha \U(\alpha t)$ resides in the left half plane. Therefore $e^{\alpha t \U(\alpha t)}$ tends to zero as $t \to \infty$. It inherently tends to zero exponentially. Therefore $\U(s + 1) - \U(s)$ tends to zero exponentially as $|s| \to \infty$ in the appropriate half-plane. We can explicitly bound $|\U(s + 1) - \U(s)|$ by $|e^{\beta s}|$ for some $|\beta| > 0$ and $\arg(\beta) = \arg(z)$ in the sector $\pi/2 < \kappa \le |\arg(sz)| \le \pi$.

Using the telescoping identity:

\begin{eqnarray*}
|\U(s) - z| &\le& \sum_{j=1}^\infty |\U(s - j + 1) - \U(s - j)|\\
&\le& \sum_{j=1}^\infty |e^{\beta(s-j)}|\\
&\le&\frac{e^{\Re (\beta s)- \Re(\beta)}}{1 - e^{\Re(\beta)}}\\
\end{eqnarray*}

Which implies $s(\U(s) - z) \to 0$ as $|s| \to \infty$ when $\pi/2 < \arg(sz) \le \pi$, because $\U(s) - z$ must have exponential decay. To refresh why this gives the result, since $s(\U(s) - z) \to 0$:

\[
\frac{\U(s + 1) - \U(s)}{e^{sz}} = e^{s (\U(s) - z)} \to 1 \,\text{as} \,|s| \to \infty \,\text{while}\, \pi/2 < |\arg(sz)| \le \pi
\]

Since $\U(s + 1) - \U(s) \sim e^{sz}$, the telescoping identity tells us that $\U(s) - z \sim \frac{e^{(s-1)z}}{1-e^{-z}}$. We leave this to the reader.
\end{proof}

\section{Closing Remarks}\label{sec7}
\setcounter{equation}{0}

To close our discussion of $\Delta y = e^{sy}$ it is apt to discuss the uniqueness of our solution. And that there is none. Our solutions aren't the only solutions. The author can construct a couple few very simply. The functional equation $\Delta y = e^{sy}$ really only makes sense on the entire complex plane, or a horizontal strip of it which marches off in either direction towards infinity, so we'll stick with solutions of such a form. 

With that being said. Consider the strange function $g(s) = \U(s, 2 +\sin 2\pi s)$, for $|\Im(s)| < \delta$ where $\delta$ is small enough so $\Re(2 + \sin 2\pi s) > 0$. The function $g$ is holomorphic in $s$ since $\U(s, z)$ is holomorphic in $z$ for $\Re(z) > 0$ and entire in $s$. Taking its difference:

\begin{eqnarray*}
\Delta g = g(s + 1) - g(s) &=& \U(s + 1, 2 + \sin 2\pi(s + 1)) - \U(s, 2 + \sin 2\pi s)\\
&=& \U(s + 1, 2 + \sin 2\pi s) - \U(s, 2 + \sin 2\pi s)\\
&=& e^{s \U(s,2+sin 2\pi s)}\\
&=& e^{sg(s)}\\
\end{eqnarray*}

Therein $g$ is also a solution to $\Delta y = e^{sy}$. Generally if $L(s)$ is a $1$-periodic function on some strip of $\mathbb{C}$ where $\Re(L) > 0$, then $\ell(s) = \U(s, L(s))$ is a solution to our first order difference equation. To say the least, this can be rather frustrating when searching for a clean uniqueness theorem.

The best the author can really say at this point in time, is that if our solution $y(s) \to z$ as $\Re(s) \to -\infty$ then $y(s) = \U(s, z)$. Proving this is next to trivial. The expression $y(s) = \OmSum_{j=1}^n z + e^{(s-j)z} \bullet z \bullet y(s-n) = \U_n(s,y(s-n))$ converges to $\U(s,z)$.

Sadly, that's really the best the author could get at the moment. So, although we've solved the modified Picard-Lindel\"{o}f equation, we've only really gotten half way. We have existence, but we do not have uniqueness based on nice conditions. The only really added benefit one gets in the Difference Equation setting is that we have a construction method.

So to summarize: Picard-Lindel\"{o}f gives us the uniqueness and existence of a First Order Differential Equation dependent on some initial condition; the author has given an existence and construction method of a First Order Difference Equation--and said next to nothing about the initial condition/uniqueness problem.\\

It would be interesting to ask a few questions before we close our paper. The author was taught this is good form. Must all solutions of $\Delta y = e^{sy}$ be of the form $y(s) = \U(s, L(s))$ for $L(s)$ a $1$-periodic function? Noticeably when $y = \U(s, L(s))$, $y(s)$ remains bounded as $\Re(s) \to -\infty$. The natural question I like to ask is if there is a solution $y$, such that $y(s)$ is unbounded as $\Re(s) \to -\infty$?\\

We have spent a load of time dealing with the operator $\mathcal{T} y = y(s + 1)$. What if we dealt with a different operator? Maybe something along the lines of $\mathcal{L}y = y(\sqrt{s})$? In such a circumstance could we construct a function that solves an even weirder Picard-Lindel\"{o}f equation $y(\sqrt{s}) = \mathcal{L}y = k(s, y)$ for some function $k$? In fact, so long as:

\[
\sum_{j=1}^\infty k(s^{2^j},z)
\]

is compactly normally convergent, then:

\[
y(s) = \OmSum_{j=1}^\infty k(s^{2^j},z)\bullet z
\]

solves the equation $\mathcal{L}y = k(s, y)$. For instance if $k(s, z) = z + sz^2$ and we let $z, s \in \mathbb{C}$ with $|s| < 1$, then the satisfying equation would be:

\[
y(\sqrt{s}) = y(s) + sy(s)^2
\]

Oddly enough,

\[
y= \OmSum_{j=1}^\infty z + s^{2^j}z^2\bullet z
\]

works perfectly fine. Can you see why?\\

Lastly, we ask a more detailed question. Can the actual Picard-Lindel\"{o}f problem be recovered from our solution? That is to say, can we use our knowledge of $\mathcal{T} y = y(s + 1) = f(s, y)$ to recover solutions to $y' = g(s, y)$. The answer to which can be intuited using a now blunt tool. The author calls it a compositional integral. The below requires a strong appreciation of our notation to make sense. Please forgive the author if it appears terse.

Rather than working with $\Delta y = y(s+ 1)-y(s)$, we can work with the operator $\Delta_h y = y(s + h) - y(s)$. Switching to this scenario is not very difficult. Whereas earlier we took infinite compositions of the form $f(s - 1, f(s - 2, f(s - 3, ...)))$ we now take infinite compositions in the form $\eta(s) = f(s-h, f(s-2h, f(s-3h, ...)))$. Now instead, $\eta$ satisfies $\eta(s + h) = f(s, \eta(s))$. Letting $h \to 0$ makes a kind of continuous sweep of the functions... and they are composed together. Whatever really that means. From this identification we can play around a bit.

Suppose we wanted to solve the equation $y' = e^{sy}$. The compositional integral can save us here. We will take a note from Leibniz in our notation here. Let $ds$ exist in the right half plane and let it approach $0$. It could perfectly well be considered an infinitesimal. Let $f(s, z) = z + e^{sz}ds$, which is almost as before.

\[
\mu(s) = f(s - ds, f(s - 2ds, f(s - 3ds, ...))) = \OmSum_{j=1}^\infty z + e^{(s-jds)z}ds \bullet z
\]

Satisfies the difference equation:

\[
\mu(s + ds) - \mu(s) = e^{s\mu(s)}ds
\]

Which if we let our $ds \to 0$ and somehow concoct a proof that $\mu$ exists, we arrive at the differential equation:

\[
\mu' = e^{s\mu}
\]

This can be written in a succinct Riemann-like formula which is equivalent to the above heuristics:

\[
\mu(s) = \lim_{n\to\infty} \OmSum_{j=1}^\infty z + \frac{1}{n}e^{(s-\frac{j}{n})z}\bullet z
\]

Where again $z$ is a parameter and determines the initial condition of our problem. The reader may freely see the similarity to the integral. In fact if we switch from $e^{sz}$ to an arbitrary $g(s)$ which is constant in $z$, we arrive at a familiar Riemann Sum:

\begin{eqnarray*}
G(s) &=& \lim_{n\to\infty} \OmSum_{j=1}^\infty z + \frac{1}{n}g(s-\frac{j}{n}) \bullet z\\
&=& z + \sum_{j=1}^\infty \frac{1}{n}g(s-\frac{j}{n})\\
&=&z + \int_{-\infty}^s g(t)\,dt\\
\end{eqnarray*}

The author will not expand further on this at the moment, as it's all up in the air at this point, excepting a single proposed clean notation for the compositional integral:

\[
\mu(s) = \int_{-\infty}^s e^{wz} dw \bullet z
\]

which again satisfies $\mu' = e^{s\mu}$ and in this instance $\mu(-\infty) = z$.\\

Sadly this paper only scratches the surface. Attacking problems like the above require far more nuance and care than the author has. It is not that each solution is necessarily hard, it is that the author lacks the vocabulary to phrase said ideas in a single general framework. So I ask: Does the reader?\\

\end{document}